\theoremstyle{plain}
\newtheorem{thm}{Theorem}[section]
\newtheorem{lemma}[thm]{Lemma}
\newtheorem{cor}[thm]{Corollary}
\newtheorem{prop}[thm]{Proposition}
\newtheorem*{thm*}{Theorem}
\theoremstyle{remark}
\newtheorem{rmk}[thm]{Remark}
\newtheorem{claim}[thm]{Claim}
\theoremstyle{definition}
\newtheorem{defn}[thm]{Definition}
\numberwithin{equation}{section}
\def\N{\mathbb{N}}
\def\R{\mathbb{R}}
\def\1{\mathbf{1}}
\DeclareMathOperator{\diam}{diam}
\newenvironment{PfofThm1}[1]
{\par\vskip2\parsep\noindent{\sc Proof of Theorem \ \ref{Thm:SubshiftEntropies}. }}{{\hfill
$\Box$}
\par\vskip2\parsep}
\newenvironment{PfofThm2}[1]
{\par\vskip2\parsep\noindent{\sc Proof of Theorem \ \ref{Thm:RelativeSubshiftEntropies}. }}{{\hfill
$\Box$}
\par\vskip2\parsep}
\newenvironment{PfofThm3}[1]
{\par\vskip2\parsep\noindent{\sc Proof of Theorem \ \ref{Thm:ZeroDimSystems}. }}{{\hfill
$\Box$}
\par\vskip2\parsep}
\title{Ubiquity of entropies of intermediate factors}
\begin{document}
%

\author{Kevin McGoff}
\address{Kevin McGoff\\
Department of Mathematics and Statistics\\
University of North Carolina at Charlotte \\
9201 University City Blvd.\\
Charlotte, NC 28223}
\email{kmcgoff1@uncc.edu}
\urladdr{https://clas-math.uncc.edu/kevin-mcgoff/}

\author{Ronnie Pavlov}
\address{Ronnie Pavlov\\
Department of Mathematics\\
University of Denver\\
2390 S. York St.\\
Denver, CO 80208}
\email{rpavlov@du.edu}
\urladdr{http://www.math.du.edu/$\sim$rpavlov/}

\thanks{The first author gratefully acknowledges support from National Science Foundation grants DMS-1613261 and DMS-1847144. The second author gratefully acknowledges the support of a Simons Foundation Collaboration Grant.}
\keywords{Symbolic dynamics, amenable, topological entropy}
\renewcommand{\subjclassname}{MSC 2010}
\subjclass[2010]{Primary: 37B10; Secondary: 37B50, 37B40}

\begin{abstract}
We consider topological dynamical systems $(X,T)$, where $X$ is a compact metrizable space and $T$ denotes an action of a countable amenable group $G$ on $X$ by homeomorphisms. 
For two such systems $(X,T)$ and $(Y,S)$ and a factor map $\pi : X \rightarrow Y$, an intermediate factor is a topological dynamical system $(Z,R)$ for which $\pi$ can be written as a composition of factor maps $\psi : X \rightarrow Z$ and $\varphi : Z \rightarrow Y$. 
In this paper we show that for any countable amenable group $G$, for any $G$-subshifts $(X,T)$ and $(Y,S)$, and for any factor map $ \pi :X \rightarrow Y$, the set of entropies of intermediate subshift factors is dense in the interval $[h(Y,S), h(X,T)]$. 
As a corollary, we also prove that if $(X,T)$ and $(Y,S)$ are zero-dimensional $G$-systems, then the set of entropies of intermediate zero-dimensional factors is equal to the interval $[h(Y,S), h(X,T)]$.
Our proofs rely on a generalized Marker Lemma that may be of independent interest.
\end{abstract}

\maketitle



\section{Introduction}
\label{Sect:Intro}

In this work, we continue a line of research, initiated by Shub and Weiss \cite{shubweiss}, concerning the following seemingly basic question: given a topological dynamical system, what can be said about the topological entropies of its factors? For the purposes of this paper, we consider a topological dynamical system to be a pair $(X,T)$, where $X$ is a compact metrizable space and $T$ is an action of a countable amenable group $G$ on $X$ by homeomorphisms. Additionally, a factor of such a system $(X,T)$ is another system $(Y,S)$ for which there exists a continuous surjection $\pi : X \to Y$ that commutes with the actions of $S$ and $T$. 
Some general results about the entropies of factors are available \cite{linden,lindenstrauss1999,shubweiss}, but the question above has not been completely resolved.
For instance, it is still not known whether every system with positive, finite entropy must have a nontrivial factor with strictly smaller entropy. 

Under certain hypotheses, there are some existing results about the entropies of factors. For instance, Lindenstrauss proved \cite{linden} that if $(X,T)$ is a finite-dimensional $\mathbb{Z}$-system, then 
the set of entropies of factors of $(X, T)$ is the entire interval $[0, h(X,T)]$. Furthermore, Lindenstrauss provided examples showing that this is not necessarily the case if $X$ has infinite dimension.

Lindenstrauss also proved what we call a `relative version' of his result, which concerns the entropies of intermediate factors. Given a countable amenable group $G$, $G$-systems $(X,T)$ and $(Y,S)$, and a factor map $\pi : X \rightarrow Y$, an intermediate factor is a $G$-system $(Z,R)$ for which $\pi$ can be written as a composition of factor maps $\psi : X \rightarrow Z$ and $\varphi : Z \rightarrow Y$. 
With this definition, Lindenstrauss showed that if $\pi$ is a factor map from a finite dimensional $\mathbb{Z}$-system
$(X, T)$ onto $(Y, S)$, then the set of entropies of intermediate factors is the entire interval $[h(Y,S), h(X,T)]$ \cite{linden}. (The relative version is of course more general than the original result concerning entropies of factors of a single system, since the original result may be obtained by taking $(Y, S)$ to be the trivial factor in the relative version.)
Using the notion of mean dimension \cite{lindenstrauss2000}, Lindenstrauss also generalized these results to extensions of minimal $\mathbb{Z}$-systems with zero mean dimension \cite{lindenstrauss1999}.

Here we examine the question in the setting of subshifts, or symbolic dynamical systems. Of course subshifts are zero-dimensional, and therefore previous results about zero-dimensional systems can be applied to subshifts. However, we are interested in the finer question of what can be said about the entropies of intermediate factors, where the intermediate systems must come from a restricted class (such as subshifts or zero-dimensional systems). In this direction, for $\mathbb{Z}$-subshifts, it was shown in \cite{shubweiss} that any system with positive entropy must have nontrivial subshift factors of strictly smaller entropy (and in fact that they can be taken arbitrarily close to $h(X,T)$). In addition, in \cite{hongetal}, in the case where $(X, T)$ is a sofic $\mathbb{Z}$-subshift, it was shown that the set of entropies of subshift factors is dense in the interval $[0, h(X,T)]$, which is in some sense the most that can be hoped for, since a subshift has only countably many subshift factors by the Curtis-Lyndon-Hedlund theorem. A corresponding relative result was also established when $(X, T)$ and $(Y, S)$ are both sofic $\mathbb{Z}$-subshifts. 

In this work, we show that the previously mentioned result of \cite{hongetal} holds even when $(X, T)$ is an arbitrary (not necessarily sofic) subshift. Furthermore, we generalize these results from $\mathbb{Z}$-subshifts to $G$-subshifts, where $G$ is an arbitrary countable amenable group. Note that the computation/realization of entropies is much more difficult in the setting of general countable amenable groups. For instance, realization of arbitrary entropies for $G$-subshifts has only recently been addressed for general classes of countable amenable groups \cite{frischetal,kwietniak}. 
In order to establish our main results at this level of generality, we first prove a generalized Marker Lemma for countable groups.

Let us now state our main results. 
In the following two results, $G$ denotes any countable amenable group, $(X,T)$ and $(Y,T)$ represent $G$-subshifts,
$\mathcal{H}_{sub}(X)$ denotes the set of numbers $r \in \mathbb{R}$ such that there exists a $G$-subshift $(Z,R)$ so that $(X,T)$ factors onto $(Z,R)$ and $h(Z,R) = r$, and
$\mathcal{H}_{sub}^{\pi}(X,Y)$ denotes the set of numbers $r \in \R$ such that there exists a $G$-subshift $(Z,R)$ with $h(Z,R) = r$ and factor maps $\varphi : X \to Z$ and $\psi : Z \to Y$ such that $\pi = \psi \circ \varphi$.

\begin{thm} \label{Thm:SubshiftEntropies}
Let $G$ be a countable amenable group, and let $(X,T)$ be a $G$-subshift. Then $\mathcal{H}_{sub}(X)$ is dense in the interval $[0,h(X,T)]$.
\end{thm}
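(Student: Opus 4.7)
Given a target $r \in [0, h(X,T)]$ and tolerance $\varepsilon > 0$, my goal is to produce a subshift factor $(Z, R)$ of $(X,T)$ with $|h(Z,R) - r| < \varepsilon$. The plan is an erasure construction driven by the generalized Marker Lemma proved earlier in the paper, iterated so as to sweep out a dense subset of $[0, h(X,T)]$.

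Let $A$ denote the alphabet of $(X,T)$ and fix a F\o lner set $F \subseteq G$ with $\log(|A|+1)/|F| < \varepsilon$. The generalized Marker Lemma provides an equivariant continuous map $\mu : X \to 2^G$ whose image $\mu(x) \subseteq G$ is $F$-separated with uniform density approximately $1/|F|$. Using $\mu$, I define the partial erasure
\[
\phi : X \to (A \cup \{*\})^G, \qquad \phi(x)_g = \begin{cases} * & \text{if } g \in \mu(x), \\ x_g & \text{otherwise}. \end{cases}
\]
Equivariance and continuity of $\mu$ make $\phi$ a factor map, so $Z := \phi(X)$ is a $G$-subshift factor of $(X,T)$. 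A direct counting argument---each pattern of $Z$ on a F\o lner set $F_n$ has at most $|A|^{|F_n|/|F|}$ preimages in $L(X, F_n)$---yields
\[
h(X,T) - \frac{\log|A|}{|F|} \;\le\; h(Z,R) \;\le\; h(X,T),
\]
so $h(Z,R)$ lies within $\varepsilon$ of $h(X,T)$.

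To reach entropies throughout $[0, h(X,T)]$, I iterate. Set $Z^{(0)} := X$, and having built $Z^{(k)}$, define $Z^{(k+1)}$ by applying the same erasure procedure to $Z^{(k)}$ (with alphabet $A \cup \{*\}$ fixed from stage $1$ onward). Each $Z^{(k)}$ is a subshift factor of $(X,T)$ through composition, and consecutive entropies satisfy
\[
0 \;\le\; h(Z^{(k)}, R) - h(Z^{(k+1)}, R) \;\le\; \frac{\log(|A|+1)}{|F|} \;<\; \varepsilon.
\]
Thus $\{h(Z^{(k)}, R)\}_{k \ge 0}$ is non-increasing with gaps less than $\varepsilon$. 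Once I establish that these entropies converge to $0$, the sequence is $\varepsilon$-dense in $[0, h(X,T)]$, some $Z^{(k)}$ has entropy within $\varepsilon$ of $r$, and letting $\varepsilon \to 0$ gives denseness of $\mathcal{H}_{sub}(X)$ in $[0, h(X,T)]$.

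The central obstacle is ensuring $h(Z^{(k)}, R) \to 0$: the markers produced at stage $k+1$ depend on the configuration of $Z^{(k)}$ and could in principle always land on coordinates already erased at earlier stages, causing the iteration to stall. I would address this by exploiting the flexibility in the Marker Lemma that permits restricting markers to any prescribed clopen subset of the current subshift---in our case, configurations whose marked coordinate is not already $*$. The cumulative density of $*$-coordinates then grows by a fixed positive amount per stage, tends to $1$, and forces $h(Z^{(k)}, R) \to 0$. If the required refinement of the Marker Lemma is not readily available, an alternative route is to construct a single-step family $\{Z_\rho\}_\rho$ calibrated to an erasure density $\rho \in (0,1)$ and estimate $h(Z_\rho, R)$ directly as a function of $\rho$ that interpolates from $h(X,T)$ at $\rho = 0$ to $0$ at $\rho = 1$, at the cost of a more delicate combinatorial entropy estimate.
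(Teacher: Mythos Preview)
Your proposal has a genuine gap precisely where you flag it: the convergence $h(Z^{(k)}) \to 0$. The Marker Lemma proved in the paper does \emph{not} produce a marker set of ``uniform density approximately $1/|F|$''; it produces a clopen set whose shifts are $(k+1)$-fold disjoint (which yields only an \emph{upper} bound on the density of visits) together with the statement that any point outside the $S^{-1}S$-neighborhood of the marker set has $k$ periods from $S^{-1}S$ on a prescribed shape $T$. There is no lower density bound, so your iteration can stall: at stage $k$ the Marker Lemma applied to $Z^{(k)}$ may return an arbitrarily sparse marker set whenever $Z^{(k)}$ already looks periodic on the relevant scale, and your ``fixed positive density increment per stage'' is unjustified. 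Your suggested fix of restricting markers to non-$*$ coordinates does not follow from the lemma as stated, and even if granted would not by itself furnish a lower density bound; your alternative single-step family $\{Z_\rho\}$ is left entirely unspecified.

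The paper's proof resolves exactly this difficulty by using the periodicity clause rather than discarding it. It applies the Marker Lemma \emph{once}, with parameters arranged so that the marker set $F$ has upper density below a fixed small threshold and so that any $x$ whose orbit avoids the $SS^{-1}$-neighborhood of $F$ has $x(T)$ with $k$ periods from $SS^{-1}$. The chain $X=X_0\to X_1\to\cdots$ is then built not by re-marking but by first replacing the symbol at each marker position by a new letter $a$ (the map $\varphi_1$), and thereafter, at step $m+1$, replacing by a second new letter $b$ every coordinate $g$ for which $x_{g_{m+1}^{-1}g}=a$. Each step alters at most a $\overline{D}(F)$-fraction of coordinates, so consecutive conditional entropies are below $\varepsilon$. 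Once $G_m\supset T S S^{-1}$, every coordinate is either in $G_m\cdot(\text{markers})$ (hence $a$ or $b$, determined by the marker pattern) or lies in a region where the periodicity clause forces patterns into a set of size at most $|\mathcal{A}|^{2|T_i|/k}$ on each quasitile $T_i$; a quasitiling count then gives $h(R_m)<\varepsilon$. In short, the paper uses the \emph{periodicity} half of the Marker Lemma to certify that entropy is eventually small, whereas your scheme uses only the \emph{disjointness} half and therefore lacks the mechanism that drives the entropy to zero.
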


The main technique in the proof of Theorem~\ref{Thm:SubshiftEntropies} is a general version of the classical Krieger Marker Lemma 
(see \cite{Boyle1983}, \cite{Krieger1982}), which may be of independent interest; see Section~\ref{marker}. We also establish the following relative version.

\begin{thm} \label{Thm:RelativeSubshiftEntropies}
Let $G$ be a countable amenable group. Let $(X,T)$ and $(Y,S)$ be $G$-subshifts with a factor map $\pi : X \to Y$. Then $\mathcal{H}_{sub}^{\pi}(X,Y)$ is dense in the interval $[h(Y,S),h(X,T)]$. 
\end{thm}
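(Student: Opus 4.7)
My plan is to adapt the proof of Theorem~\ref{Thm:SubshiftEntropies} to the relative setting.  After passing to a higher-block presentation of $X$, I will assume without loss of generality that $\pi$ is induced by a one-block symbol map $\pi_0 : \mathcal{A} \to \mathcal{B}$, where $\mathcal{A}$ and $\mathcal{B}$ denote the alphabets of $X$ and $Y$.  The key observation is that the absolute construction underlying Theorem~\ref{Thm:SubshiftEntropies} replaces $x_g$ by a single uninformative placeholder symbol $*$ at non-marker positions; here I would instead replace $x_g$ by its $Y$-image $\pi_0(x_g) \in \mathcal{B}$.  This preserves the $Y$-coordinate information at every position, so the resulting subshift factor of $X$ will automatically also factor onto $Y$ through $\pi$.

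Concretely, fix $r \in [h(Y,S), h(X,T)]$ and $\epsilon > 0$.  I would invoke the generalized Marker Lemma (Section~\ref{marker}) applied to $X$ to produce a clopen set $M \subseteq X$ whose visit set $\{g \in G : T^g x \in M\}$ has the good syndetic structure provided by the lemma and a density parameter $\delta \in (0,1)$ that one can tune.  Define the sliding block code $\varphi : X \to (\mathcal{A} \sqcup \mathcal{B})^G$ by
\[
\varphi(x)_g =
\begin{cases}
x_g, & T^g x \in M, \\
\pi_0(x_g), & T^g x \notin M,
\end{cases}
\]
let $Z = \varphi(X)$, and define $\psi : Z \to Y$ as the one-block code induced by $a \mapsto \pi_0(a)$ on $\mathcal{A}$ and $b \mapsto b$ on $\mathcal{B}$.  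Then $\psi \circ \varphi = \pi$, so $(Z,R)$ is an intermediate $G$-subshift factor and one gets $h(Y,S) \le h(Z,R) \le h(X,T)$ for free.

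The remaining task is to quantify $h(Z,R)$ as a function of $\delta$ and show that the resulting set of entropies is dense in $[h(Y,S), h(X,T)]$.  A F\o{}lner counting argument parallel to the one in Theorem~\ref{Thm:SubshiftEntropies} yields, with $K = \max_{b \in \mathcal{B}} |\pi_0^{-1}(b)|$, bounds roughly of the form $h(Z,R) \le h(Y,S) + H(\delta) + \delta \log K$ and $h(Z,R) \ge h(X,T) - (1-\delta)\log K$: each $\varphi$-pattern over a F\o{}lner set is determined by the $Y$-projection, the marker pattern, and the $X$-symbols at marker positions (each constrained to a $\pi_0^{-1}$-fiber), while conversely each $\varphi$-pattern determines $x$ up to a $\pi_0$-fiber at each non-marker site.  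These bounds force $h(Z,R) \to h(Y,S)$ as $\delta \to 0$ and $h(Z,R) \to h(X,T)$ as $\delta \to 1$.  The principal obstacle is then density of the realized entropies: I expect to obtain it by exploiting the flexibility of the Marker Lemma, combined with further thinnings of $M$ (imposing extra clopen conditions on $x$) or replacement of $\pi_0$ by intermediate alphabet quotients $\mathcal{A} \to \mathcal{A}' \to \mathcal{B}$ at the non-marker positions, with the uniform F\o{}lner estimates above supplying the quantitative continuity in the density parameter needed to fill the interval densely.
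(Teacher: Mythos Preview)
Your intermediate subshift $Z$ is well-defined and does factor onto $Y$ via $\psi$, so each such $Z$ contributes a legitimate point of $\mathcal{H}_{sub}^\pi(X,Y)$. The genuine gap is the density argument. First, the Marker Lemma (Lemma~\ref{markerlemma}) does not produce a clopen set of prescribed density $\delta \in (0,1)$: it produces a set $F$ whose visits are \emph{sparse} (Lemma~\ref{Lemma:Density} bounds $\overline{D}(F)$ from above by a quantity one makes small), together with a periodicity statement away from $F$ that your construction never uses. From the lemma alone you only realize $\delta$ near $0$, hence $h(Z)$ near $h(S)$; your lower bound $h(Z) \ge h(T)-(1-\delta)\log K$ would require a \emph{lower} bound on marker density, which the lemma does not give. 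Second, even granting a clopen $M_\delta$ for every $\delta$, your two inequalities only trap $h(Z_\delta)$ in an interval of width $H(\delta) + \log K - (h(T)-h(S))$, and nothing forces $\delta \mapsto h(Z_\delta)$ to vary without jumps. Thinnings of $M$ only push $\delta$ toward $0$, and there are only finitely many intermediate alphabet quotients $\mathcal{A} \to \mathcal{A}' \to \mathcal{B}$, so neither proposed fix yields the required density.

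The paper circumvents this by using a \emph{chain} rather than a one-parameter family. Proposition~\ref{Prop:Rainbow} already furnishes subshift factors $X = X_0 \to X_1 \to \cdots \to X_{M+1} = \{*\}$ with $h(R_m \mid R_{m+1}) < \epsilon$ at each step. For the relative statement one then forms the topological joinings $W_m = J(\psi_m, \pi)$ of $X_m$ with $Y$, and Lemma~\ref{Lemma:SpecialFlower} transfers the conditional-entropy bound: $h(Q_m) - h(Q_{m+1}) \le h(Q_m \mid Q_{m+1}) \le h(R_m \mid R_{m+1}) < \epsilon$, with $W_0 \cong X$ and $W_{M+1} \cong Y$. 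Your $Z$ is itself conjugate to such a joining---namely $J(\pi_M, \pi)$, where $\pi_M$ erases non-marker coordinates to a blank---so the construction is morally the same; what is missing is the iterative chain with per-step conditional entropy control, which is exactly what Proposition~\ref{Prop:Rainbow} supplies and what a single marker set with a ``tunable'' density cannot.
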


From these results, we are actually able to establish a similar result about general zero-dimensional systems as well.

In the following result, for zero-dimensional $G$-systems $(X,T)$ and $(Y,S)$ and a factor map $\pi: X \rightarrow Y$, we let $\mathcal{H}_{0}^{\pi}(X,Y)$ denote the set of numbers $r \in \R$ such that there exists a zero-dimensional $G$-system $(Z,R)$ with $h(Z,R) = r$ and factor maps $\varphi : X \to Z$ and $\psi : Z \to Y$ such that $\pi = \psi \circ \varphi$.






\begin{thm} \label{Thm:ZeroDimSystems}
Let $G$ be a countable amenable group, and let $\pi: (X,T) \rightarrow (Y,S)$ be a factor map between zero-dimensional $G$-systems. 
Then $\mathcal{H}_{0}^{\pi}(X,Y) = [h(Y,S), h(X,T)]$.
\end{thm}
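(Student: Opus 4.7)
The containment $\mathcal{H}_0^\pi(X,Y) \subseteq [h(Y,S), h(X,T)]$ is immediate from monotonicity of entropy under factor maps, and the endpoints are realized by $Z = Y$ and $Z = X$. So it remains, given $t \in (h(Y,S), h(X,T))$ (in particular $h(Y,S) < \infty$), to construct a zero-dimensional intermediate $G$-system $Z$ with $h(Z,R) = t$. The plan is to build $Z$ as an inverse limit of a compatible sequence of intermediate $G$-subshifts, each produced by Theorem~\ref{Thm:RelativeSubshiftEntropies}.

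First, since $X$ and $Y$ are zero-dimensional, I would write them as compatible inverse limits of $G$-subshifts: $X = \varprojlim X_n$ and $Y = \varprojlim Y_n$, with $\pi = \varprojlim \pi_n$ for factor maps $\pi_n : X_n \to Y_n$. This is accomplished by choosing clopen partitions of $X$ that refine the $\pi$-preimages of clopen partitions of $Y$. By repeatedly applying Theorem~\ref{Thm:RelativeSubshiftEntropies} to each connecting map $Y_n \to Y_{n-1}$ and inserting intermediate subshift factors of $Y$ (which remain subshift factors of $Y$), I may arrange that $h(Y_n) - h(Y_{n-1}) < 2^{-n}$ for all $n$. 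After truncation, I may also assume $h(X_n) > t$ for all $n \ge 1$.

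I would then inductively construct $G$-subshifts $Z_n$ together with factor maps $\varphi_n : X_n \to Z_n$, $\psi_n : Z_n \to Y_n$, and $\rho_n : Z_n \to Z_{n-1}$ satisfying $\psi_n \circ \varphi_n = \pi_n$ and the natural compatibilities with the $X$- and $Y$-inverse systems. Starting from $Z_0 = Y_0$, given $Z_{n-1}$, I would form the $G$-subshift $J_n$ defined as the image of $X_n$ under the map $x \mapsto (\varphi_{n-1}(a_n(x)), \pi_n(x)) \in Z_{n-1} \times Y_n$, where $a_n : X_n \to X_{n-1}$ is the connecting map. Then $J_n$ lies inside the fiber product $Z_{n-1} \times_{Y_{n-1}} Y_n$ and comes equipped with factor maps onto both $Z_{n-1}$ and $Y_n$ that realize all of the required compatibilities. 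Applying Theorem~\ref{Thm:RelativeSubshiftEntropies} to the factor map $X_n \to J_n$ yields a $G$-subshift $Z_n$ whose entropy can be made arbitrarily close to any prescribed target in $[h(J_n), h(X_n)]$; I would select the target slightly below $t$, so that $h(Z_n) \nearrow t$ monotonically as $n \to \infty$.

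Finally, $Z := \varprojlim Z_n$ is a zero-dimensional $G$-system with $h(Z,R) = \sup_n h(Z_n) = t$, and the compatible maps $\varphi_n, \psi_n$ induce factor maps $\varphi : X \to Z$ and $\psi : Z \to Y$ with $\psi \circ \varphi = \pi$, as required. The main obstacle to making this scheme succeed is to establish an entropy bound of the form
\[
h(J_n) \le h(Z_{n-1}) + \bigl(h(Y_n) - h(Y_{n-1})\bigr),
\]
which is what ensures $h(J_n) \le t$ at each step and thereby allows Theorem~\ref{Thm:RelativeSubshiftEntropies} to be applied with a target value just below $t$. For $\mathbb{Z}$-subshifts this follows from a straightforward pattern-counting estimate on the fiber product; in the amenable setting one needs a Følner-based counting argument, together with an appropriate topological conditional entropy inequality, and very possibly a more careful refinement of the inverse system for $Y$ so that the maximum-fiber (rather than merely average) entropies of the successive connecting maps decay sufficiently fast.
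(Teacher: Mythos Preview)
Your inverse-limit strategy matches the paper's at the highest level, but you have set yourself a harder problem by approximating $Y$ by subshifts, and the obstacle you flag at the end is a genuine gap. The displayed inequality $h(J_n)\le h(Z_{n-1})+\bigl(h(Y_n)-h(Y_{n-1})\bigr)$ does \emph{not} follow from pattern counting, even for $\mathbb{Z}$-subshifts: counting patterns in the fiber product yields only $h(J_n)\le h(Z_{n-1})+h(Y_n\mid Y_{n-1})$, and the topological conditional entropy $h(Y_n\mid Y_{n-1})$ can strictly exceed the entropy gap $h(Y_n)-h(Y_{n-1})$. Theorem~\ref{Thm:RelativeSubshiftEntropies} gives you density of \emph{entropies} but no control over \emph{conditional} entropies, so refining the $Y$-tower via that theorem does not force $h(Y_n\mid Y_{n-1})\to 0$. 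The gap can be repaired---Proposition~\ref{Prop:Rainbow} produces chains with small conditional-entropy steps, and joining such a chain with $Y_{n-1}$ (as in Proposition~\ref{Prop:DoubleRainbow}, via Lemma~\ref{Lemma:SpecialFlower}) interpolates between $Y_n$ and $Y_{n-1}$ with small conditional-entropy increments---but that is exactly the machinery you have not invoked.

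The paper avoids the issue entirely by never approximating $Y$. To get density of $\mathcal{H}_0^\pi(X,Y)$, it takes a subshift factor $X_{\mathcal{P}}$ of $X$ with $h(T_{\mathcal{P}})$ close to $h(T)$, applies Proposition~\ref{Prop:Rainbow} to $X_{\mathcal{P}}$ to obtain a chain $X_{\mathcal{P}}=Z_0\to\cdots\to Z_{M+1}=\{*\}$ with $h(R_m\mid R_{m+1})<\epsilon$, and then forms the joinings $W_m=J(\psi_m,\pi)\subset Z_m\times Y$ with $Y$ itself (Proposition~\ref{Prop:DoubleRainbow}). The $W_m$ are zero-dimensional (not subshifts, but that is all the theorem asks), are intermediate for $\pi$, and have entropies $\epsilon$-dense in $[h(S),h(T)]$. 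With density in hand, the inverse limit is built by iterating on the target side only: set $Z_0=Y$, and at each stage use density of $\mathcal{H}_0^{\varphi_n}(X,Z_n)$ in $[h(R_n),h(T)]$ to pick a zero-dimensional $Z_{n+1}$ intermediate for $\varphi_n:X\to Z_n$ with $h(R_{n+1})\in(t-\tfrac1{n+1},t)$; then $\varprojlim Z_n$ has entropy exactly $t$. No tower for $Y$, no fiber-product bound, and no compatibility juggling between separate $X$- and $Y$-towers is required.
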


We note that this result provides more information about the aforementioned result of \cite{linden} when $(X,T)$ and $(Y,S)$ are zero-dimensional: in this case, not only do we know that every entropy in $[h(Y,S), h(X,T)]$ can be achieved by factors, but we now know that those factors can always be chosen to be zero-dimensional.

The rest of the paper is organized as follows.
Section~\ref{defs} contains necessary background and notation, Sections~\ref{marker} and \ref{density} contain some preliminary results required for later proofs, and Sections~\ref{mainprf}, \ref{relprf}, 
and \ref{zeroprf} contain the proofs of 
Theorems~\ref{Thm:SubshiftEntropies}-\ref{Thm:ZeroDimSystems}.

\section*{Acknowledgments}

The authors would like to thank Uijin Jung for introducing these questions to them and for many useful conversations. Additionally, the authors wish to thank Mike Boyle for asking them about zero-dimensional factors, thus motivating Theorem~\ref{Thm:ZeroDimSystems}. The authors also wish to thank Tomasz Downarowicz for helpful conversations regarding zero-dimensional factors and topological joinings. 

\section{Background and notation}\label{defs}

%
%

\subsection{Countable amenable groups}

Let $G$ be a countable amenable group. For sets $A,K \subset G$, we let $A K = \{ a k : a \in A, k \in K\}$ and $A \, \Delta \, K = (A \setminus K) \cup (K \setminus A)$. A sequence $\{F_n\}_{n=1}^{\infty}$ of finite subsets of $G$ is called a F{\o}lner sequence if for each finite set $K \subset G$, we have
\begin{equation*}
\lim_n \frac{ | K F_n  \, \Delta \, F_n|}{ |F_n|} = 0.
\end{equation*}
The existence of a F{\o}lner sequence is equivalent to the amenability of the group $G$. Let $\{F_n\}_{n=1}^{\infty}$ be a F{\o}lner sequence, which we fix here and use throughout the paper.

\begin{defn}
Suppose $U$ and $K$ are non-empty finite subsets of $G$ and $\delta >0$. We say that $U$ is \textit{$(K,\delta)$-invariant} if
\begin{equation*}
\frac{| KU \, \triangle \, U |}{|U|} < \delta.
\end{equation*}
\end{defn}
Note that the definition of F{\o}lner sequence yields that for any finite set $K \subset G$ and any $\delta >0$, the set $F_n$ is $(K,\delta)$-invariant for all large enough $n$.

\begin{defn}
Suppose $\{A_1,\dots,A_r\}$ is a collection of finite sets of $G$. We say that the collection is \textit{$\delta$-disjoint} if there exist subsets $\{B_1,\dots,B_r\}$ such that
\begin{enumerate}
\item $B_i \subset A_i$ for all $i$,
\item for $i \neq j$, we have $B_i \cap B_j = \varnothing$, and
\item $|B_i|/|A_i| > 1-\delta$ for all $i$.
\end{enumerate}
\end{defn}

\begin{defn}
Suppose $\{A_1,\dots,A_r\}$ is a collection of finite sets of $G$, and let $B \subset G$. We say that the collection \textit{$\alpha$-covers} $B$ if
\begin{equation*}
 \bigl| B \cap \bigl( \bigcup_i A_i \bigr) \bigr| \geq \alpha |B|.
\end{equation*}
\end{defn}

\begin{defn}{\rm (\cite{OW1987})}
For $\delta >0$, a collection of finite sets $\{T_1,\dots,T_N\}$ is said to \textit{$\delta$-quasitile} a group $G$ (or to be a \textit{set of $\delta$-quasitiles for $G$}) if $\{e\} \in T_1 \subset \dots \subset T_N$ and for any finite set $D \subset G$, there are finite sets $C_i \subset G$,  for $1 \leq i \leq N$, such that 
\begin{enumerate}
\item for fixed $i$, the collection $\{T_i c : c \in C_i \}$ is $\delta$-disjoint
\item for $i \neq j$, $T_i C_i \cap T_j C_j = \varnothing$, and
\item the collection $\{T_1 C_1, \dots, T_N C_N \}$ $(1-\delta)$-covers $D$.
\end{enumerate}
\end{defn}

The fundamental result of Ornstein and Weiss \cite[Theorem 6]{OW1987} states that for any $\delta >0$, there exists $N$ such that for any finite set $K$ and any $\delta' >0$, there exist finite sets $\{T_1,\dots,T_N\}$ that are $(K,\delta')$-invariant and that $\delta$-quasi-tile $G$. We remark that by making $\delta' >0$ small enough and $K$ large enough, one may force $|T_1| = \min_i |T_i|$ to be arbitrarily large. 

For a collection of quasitiles $\{T_1,\dots,T_N\}$, we refer to any collection $\{C_1,\dots,C_N\}$ satisfying (1)-(3) above as a collection of \textit{center sets} corresponding to $D$. If $\{C_1,\dots,C_N\}$ is a collection of center sets for $D$, then let $C'_i = \{ c \in C_i : T_i c \cap D \neq \varnothing\}$, and observe that the collection $\{C'_1,\dots,C'_N\}$ is again a collection of center sets for $D$. Thus, for a collection of quasitiles $\{T_1,\dots,T_N\}$ and finite $D \subset G$, there exists a collection of center sets $\{C_1,\dots,C_N\}$, and we may assume without loss of generality that  for all $c \in C_i$, we have $T_i c \cap D \neq \varnothing$.

\begin{lemma} \label{Lemma:Coverings}
Let $\delta \in (0,1)$.
Suppose that $\{S_1,\dots,S_N\}$ is a collection of finite sets that $\delta$-quasitiles $G$. Let $m = |S_1| = \min \{ |S_1|, \dots, |S_N|\}$, and $S = \bigcup_i S_i = S_N$.
Then for all large enough $n$, there exists $C = C(n) \subset G$ such that $\{S c : c \in C\}$ $(1-\delta)$-covers $F_n$ and 
\begin{equation*}
|C| \leq \frac{(1+\delta) |F_n|}{(1-\delta) m }.
\end{equation*}
\end{lemma}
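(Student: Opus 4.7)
The plan is to apply the $\delta$-quasitiling property directly with $D = F_n$. This produces finite center sets $C_1, \dots, C_N \subset G$ such that each $\{S_i c : c \in C_i\}$ is $\delta$-disjoint, the sets $S_1 C_1, \dots, S_N C_N$ are pairwise disjoint, and together they $(1-\delta)$-cover $F_n$; by the remark preceding the lemma, I may also assume $S_i c \cap F_n \neq \varnothing$ for every $c \in C_i$. I then take $C := \bigcup_{i=1}^N C_i$. The covering conclusion is essentially free: since $S_i \subseteq S$ for every $i$, we have $\bigcup_{c \in C} S c \supseteq \bigcup_i S_i C_i$, and this larger union inherits the $(1-\delta)$-cover of $F_n$ from property (3) of the quasitiling.

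The content is in bounding $|C|$, which I will do by a volume comparison inside $SS^{-1} F_n$. For each $i$ and each $c \in C_i$, the $\delta$-disjointness of $\{S_i c : c \in C_i\}$ supplies sets $B_{i,c} \subseteq S_i c$ with $|B_{i,c}| > (1-\delta)|S_i|$ that are pairwise disjoint as $c$ varies in $C_i$. Combined with the cross-index disjointness $S_i C_i \cap S_j C_j = \varnothing$ for $i \neq j$, the full family $\{B_{i,c}\}_{i,\, c \in C_i}$ is pairwise disjoint. The assumption $S_i c \cap F_n \neq \varnothing$ gives $c \in S_i^{-1} F_n$, hence each $B_{i,c} \subseteq S_i c \subseteq S_i S_i^{-1} F_n \subseteq SS^{-1} F_n$. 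Using $|S_i| \geq m$ and $|C| \leq \sum_i |C_i|$, summing cardinalities yields
\[
(1-\delta)\, m\, |C| \;\leq\; (1-\delta) \sum_i |S_i|\,|C_i| \;<\; \sum_{i} \sum_{c \in C_i} |B_{i,c}| \;\leq\; |SS^{-1} F_n|.
\]
The F{\o}lner property applied to the fixed finite set $K = SS^{-1}$ (which contains $e$, since $e \in T_1 \subseteq S$, so $F_n \subseteq SS^{-1} F_n$) yields $|SS^{-1} F_n| \leq (1+\delta)|F_n|$ for all sufficiently large $n$. Dividing by $(1-\delta)\, m$ produces the desired bound on $|C|$.

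The only mild obstacle is keeping the two disjointness conditions straight: intra-index $\delta$-disjointness is used to produce the $B_{i,c}$ with controlled size, while inter-index exact disjointness (property (2) of the quasitiling) is what allows summation over all $i$ without losing a factor. Once the family $\{B_{i,c}\}$ is recognized as genuinely pairwise disjoint inside $SS^{-1}F_n$, the rest is a direct counting argument, and the F{\o}lner hypothesis is the only place where ``for all large enough $n$'' is needed.
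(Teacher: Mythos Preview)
Your proof is correct and follows essentially the same approach as the paper: take $C = \bigcup_i C_i$ from the quasitiling of $F_n$ (with centers trimmed so each $S_i c$ meets $F_n$), get the covering from $S_i \subseteq S$, and bound $|C|$ by summing the disjoint sets $B_{i,c}$ inside $SS^{-1}F_n$, whose size is controlled by the F{\o}lner condition. The only cosmetic slip is the reference to ``$T_1$'' where you mean $S_1$.
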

\begin{proof}
Since $\{F_n\}_{n=1}^{\infty}$ is a F{\o}lner sequence, for all large enough $n$, we have
\begin{equation} \label{Eqn:Richter}
\frac{| S S^{-1} F_n \,  \triangle \, F_n|}{|F_n|} < \delta.
\end{equation}
Fix $n$ satisfying this condition.
Since $\{S_1,\dots,S_N\}$ is a collection of $\delta$-quasitiles, there exists a collection of center sets corresponding to the set $F_n$ with the additional property that for each $c \in C_i$, we have $S_i c \cap F_n \neq \varnothing$. 

Let $C = \bigcup_i C_i$. Then 
\begin{equation*}
A = \bigcup_{i} S_i C_i \subset S C,
\end{equation*}
and since $A$ $(1-\delta)$-covers $F_n$, we see that $S C$ $(1-\delta)$-covers $F_n$. 

Now let us estimate $|C|$. Let $C_i = \{c^i_1,\dots,c^i_{M_i}\}$, and let $A^i_k = S_i c^i_k$, for $k = 1, \dots, M_i$. Then the collection $\{A^i_k : 1 \leq i \leq N, 1 \leq k \leq M_i \}$ is $\delta$-disjoint (by the quasitile properties (1) and (2)). Then there exist sets $B^i_k \subset A^i_k$ such that $|B^i_k| \geq |A^i_k| (1-\delta)$ and if $B^i_k \cap B^j_{\ell} \neq \varnothing$, then $i = j$ and $k = \ell$. 
Then we have
\begin{align*}
\Biggl| \bigcup_{i=1}^N \bigcup_{k = 1}^{M_i} B^i_k \Biggr| & = \sum_{i=1}^N \sum_{k=1}^{M_i} |B^i_k| \\
& \geq  \sum_{i=1}^N \sum_{k=1}^{M_i} (1-\delta) |A^i_k| \\
& = (1-\delta) \sum_{i=1}^N \sum_{k=1}^{M_i} |S_i| \\
& = (1-\delta) \sum_{i = 1}^N |S_i| \cdot |C_i| \\
& \geq (1-\delta) m \sum_{i=1}^N |C_i| \\
& \geq (1-\delta) m |C|.
\end{align*}
Fix $1\leq i \leq N$ and $1 \leq k \leq M_i$. 
Recall that $A^i_k \cap F_n = S_i c^i_k \cap F_n \neq \varnothing$. Let $g \in S_i c^i_k \cap F_n$. Then there exists $s \in S_i$ such that $g = s c^i_k$, which implies that $c^i_k = s^{-1} g \in S^{-1} F_n$, and hence $S c^i_k \subset S S^{-1} F_n$. Then we see that
\begin{align*}
\Biggl| \bigcup_{i=1}^N \bigcup_{k = 1}^{M_i} B^i_k \Biggr| & \leq \Biggl| \bigcup_{i=1}^N \bigcup_{k = 1}^{M_i} A^i_k \Biggr| \\
& \leq | S S^{-1} F_n| \\
& \leq (1+\delta) |F_n|,
\end{align*}
where we have used (\ref{Eqn:Richter}).
Combining the above inequalities gives
\begin{align*}
 (1-\delta) m |C| \leq \Biggl| \bigcup_{i=1}^N \bigcup_{k = 1}^{M_i} B^i_k \Biggr|  \leq (1+\delta) |F_n|,
\end{align*}
from which we conclude that
\begin{equation*}
|C| \leq \frac{(1+\delta) |F_n|}{(1-\delta) m },
\end{equation*}
as desired.
\end{proof}

\subsection{Topological and symbolic dynamics}

We present some basic definitions from topological and symbolic dynamics; for a more thorough introduction to symbolic dynamics, see \cite{LindMarcus}.

\begin{defn}
Let $G$ be a countable amenable group. A $G$-\textit{system} is a pair $(X,T)$, where $X$ is a compact metrizable space and $T = (T^g)_{g \in G}$ is an action of $G$ on $X$ by homeomorphisms. 
\end{defn}

\begin{defn}
A \textit{factor map} from a $G$-system $(X,T)$ to a $G$-system $(Y,S)$ is a surjective function $\varphi : X \to Y$ that is continuous and commutes with the actions of $S$ and $T$. A \textit{conjugacy} is a factor map that is also injective. When $\varphi : X \rightarrow Y$ is a factor map, we refer to $(Y,S)$ as a factor of $(X,T)$.
\end{defn}


\begin{defn}
For any finite alphabet $\mathcal{A}$ and countable amenable group $G$, the \textit{full $G$-shift} over $\mathcal{A}$ is the set $\mathcal{A}^{G}$, which is viewed as a compact topological space with the (discrete) product topology.
\end{defn}

\begin{defn}
A \textit{pattern} over $\mathcal{A}$ is any function $w$ from a finite set $S \subset G$ to $\mathcal{A}$, in which case $S$ is called the \textit{shape} of $w$.
\end{defn}

\begin{defn}
The \textit{shift action} $\sigma = (\sigma^g)_{g \in G}$ is the action of $G$ on $\mathcal{A}^G$ by automorphisms defined by $(\sigma^g (x))(h) = x(hg)$ for all $g,h \in G$. 
\end{defn}

\begin{defn}
A \textit{$G$-subshift} is a closed subset of a full shift $\mathcal{A}^{G}$ that is invariant under $\sigma$.
\end{defn}

Any subshift $X$ is a compact space with the induced topology from $\mathcal{A}^G$, and so $(X,\sigma|_X)$ is a topological $G$-dynamical system. In fact, this topology on $X$ is also generated by the ultra-metric given by
\begin{equation*}
d(x,y) = 2^{- \min\{ n \, : \, x_{g_n} \neq y_{g_n} \}},
\end{equation*}
where $G = \{g_n\}_{n=0}^{\infty}$ is an enumeration of $G$. 

\begin{defn}
For any $G$-subshift $X$ and finite $S \subset G$, the \textit{$S$-language} of $X$, denoted by $\mathcal{L}_S(X)$, is the set of all patterns $w$ with shape $S$ 
which appear as subpatterns of points of $X$, i.e., for which there exists $x \in X$ and $g \in G$ with $x(Sg) = w$.
\end{defn}

\begin{defn}
For any $G$-subshift $X$ and $w \in \mathcal{L}_S(X)$, the \textit{cylinder set} $[w]$ is the set of all $x \in X$ with $x(S) = w$.  
\end{defn}

All factor maps between subshifts have a simple combinatorial description.

\begin{defn}\label{SBC}
Given alphabets $\mathcal{A}$ and $\mathcal{B}$, a finite set $S \subset G$, and a function $f: \mathcal{A}^S \rightarrow \mathcal{B}$, the \textit{sliding block code} induced by $S$ and $f$ is the map $\varphi: \mathcal{A}^G \rightarrow \mathcal{B}^G$ defined by 
\[
(\varphi(x))(g) = f(x(Sg))
\]
for all $x \in \mathcal{A}^G$ and $g \in G$. A \textit{$1$-block code} is a sliding block code when $S = \{e\}$.
\end{defn}

If $X$ and $Y$ are subshifts, then for every factor map $\pi : X \rightarrow Y$, there is a sliding block code $\varphi$ such that $\pi = \varphi|_X$; this is the classical Curtis-Lyndon-Hedlund theorem when $G = \mathbb{Z}$, and the proof is essentially the same for general $G$. Even more is true: up to conjugacy, every factor map can be written as a $1$-block code, i.e.,
if $\varphi: X \rightarrow Y$ is a factor map, then there exists a conjugacy $\psi$ on $X$ and a $1$-block code $\rho$ on $\psi(X)$ so that
$\varphi = \rho \circ \psi$.

\subsection{Entropy}

When $G$ is a countable amenable group, one may associate to each $G$-system $(X,T)$ an extended real number called the \textit{topological entropy} of $(X,T)$. 
The entropy of a system quantifies its complexity and serves as an important conjugacy invariant in the study of topological dynamical systems.
The entropy theory for countable amenable group actions is well-established \cite{kerrli,OW1987}. Here we state the main entropy-related definitions and properties that we require elsewhere in the paper. We begin with topological entropy and topological conditional entropy.

\begin{defn}
Suppose $\pi : X \to Y$ is a factor map between $G$-systems $(X,T)$ and $(Y,S)$. Let $\mathcal{U}$ be a finite open cover of $X$. For any subset $K \subset X$, let $N(\mathcal{U} \mid K)$ be the minimal cardinality of any subcover of $\mathcal{U}$ that covers $K$. Define
\begin{equation*}
h( T, \mathcal{U} \mid S) = \lim_n \frac{1}{|F_n|} \log \, \sup_{y \in Y} N( \mathcal{U}_{F_n} \mid \pi^{-1}(y) ),
\end{equation*}
where $\mathcal{U}_{F_n} = \bigvee_{g \in F_n} T^{g^{-1}} \mathcal{U}$ and $\{F_n\}$ is a F\o lner sequence.
Then the \textit{conditional entropy of $(X,T)$ with respect to $(Y,S)$} is given by 
\begin{equation*}
h(T \mid S) = \sup_{\mathcal{U}} h( T, \mathcal{U} \mid S),
\end{equation*}
where the supremum is taken over all finite open covers $\mathcal{U}$. 
When $(Y,S)$ is the trivial (one-point) system, we call $h(T \mid S)$ the \textit{topological entropy of $(X,T)$}, and denote it simply by $h(T)$. 
\end{defn}

In Section \ref{Sect:Intro}, we use the notation $h(X,T)$ to denote the topological entropy, but throughout the rest of the paper we use the notation $h(T)$ (noting that the dependence on $X$ is implicit, as it is the domain of the map $T$).
We remark that the above limit always exists and is independent of the choice of F\o lner sequence. Furthermore, if $(X,T)$ is a subshift and $\mathcal{U}$ is the clopen partition of $X$ according to the symbol at the identity element of $G$, then $\mathcal{U}$ is generating, and therefore $h(T \mid S) = h(T, \mathcal{U} \mid S)$. Lastly, we note that $h(T \mid S) \leq h(T)$. 

%
%
%

It is well-known that for any countable amenable group $G$, factor maps cannot increase topological entropy, i.e. if $\varphi$ is a factor map from $(X,T)$ to $(Y,S)$, then $h(T) \geq h(S)$. This observation motivates the main question of this paper, namely whether a large set of entropies is achieved for intermediate systems `between $X$ and $Y$.'

In the other direction, it is known that the decrease in topological entropy under a factor map is bounded above by the topological conditional entropy. Results of this type have been established by Bowen \cite{bowenendo} and by Ledrappier and Walters \cite{Ledrappier1977} for $\mathbb{Z}$ actions. We require the following corollary of results from \cite{Yan2015}.
\begin{thm}[\cite{Yan2015}]\label{bowenfiber}
For any factor map $\pi: X \rightarrow Y$ between $G$-systems $(X, T)$ and $(Y, S)$, 
\[
h(T) \leq h(S) + h(T \mid S).
\]
\end{thm}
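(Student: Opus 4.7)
The plan is to deduce this inequality from the measure-theoretic Abramov--Rokhlin addition formula together with the variational principle for topological entropy and its relative analogue for conditional entropy. Write $M_T(X)$ for the set of $T$-invariant Borel probability measures on $X$, and for $\mu \in M_T(X)$ let $\nu = \pi_\ast \mu \in M_S(Y)$ denote its pushforward. I would combine three ingredients: (a) the variational principle for topological entropy of a countable amenable group action, $h(T) = \sup_{\mu \in M_T(X)} h_\mu(T)$, applied to both $(X,T)$ and $(Y,S)$; (b) the Abramov--Rokhlin addition formula for amenable group actions, which for every $\mu \in M_T(X)$ gives
\begin{equation*}
h_\mu(T) \;=\; h_\nu(S) \,+\, h_\mu\bigl(T \,\bigm|\, \pi^{-1}\mathcal{B}_Y\bigr),
\end{equation*}
where $\mathcal{B}_Y$ denotes the Borel $\sigma$-algebra on $Y$; and (c) the inequality $h_\mu(T \mid \pi^{-1}\mathcal{B}_Y) \le h(T \mid S)$ for every $\mu \in M_T(X)$, which is one direction of the variational principle for conditional topological entropy established in \cite{Yan2015}.

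Given (a)--(c), the argument is formal. For any $\mu \in M_T(X)$, the formula (b) combined with $h_\nu(S) \le h(S)$ from (a) on $(Y,S)$ and with (c) yields
\begin{equation*}
h_\mu(T) \;=\; h_\nu(S) + h_\mu\bigl(T \mid \pi^{-1}\mathcal{B}_Y\bigr) \;\le\; h(S) + h(T \mid S).
\end{equation*}
Taking the supremum over $\mu \in M_T(X)$ and invoking (a) on $(X,T)$ produces $h(T) \le h(S) + h(T \mid S)$.

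The main obstacle, and the reason Theorem~\ref{bowenfiber} is a corollary of \cite{Yan2015} rather than being proved directly, is ingredient (c). The variational principle (a) is classical, and the Abramov--Rokhlin formula (b) for amenable group actions is now standard. By contrast, (c) requires a delicate relativized cover/partition approximation that compares measure-theoretic fiberwise conditional entropy with the topological quantity $h(T \mid S)$ defined in the excerpt via $\sup_y N(\mathcal{U}_{F_n} \mid \pi^{-1}(y))$. Carrying this out in the amenable setting needs F{\o}lner averaging together with the Ornstein--Weiss quasitiling machinery (cf. the lemmas recalled in Section~\ref{defs}) in order to pass from finite F{\o}lner blocks to the limiting exponential growth rates; this is exactly the content of \cite{Yan2015}. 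I would therefore quote (c) as a black box and assemble the three ingredients as above rather than attempt to reprove the conditional variational principle.
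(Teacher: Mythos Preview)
The paper does not prove Theorem~\ref{bowenfiber}; it merely quotes it as a corollary of results from \cite{Yan2015}. Your proposal is therefore more than the paper offers, and the outline you give is sound: the combination of the variational principle, the Abramov--Rokhlin formula for amenable actions, and the ``easy'' half of the conditional variational principle from \cite{Yan2015} (your ingredient (c)) yields the inequality exactly as you describe. Your identification of (c) as the substantive input from \cite{Yan2015}, with (a) and (b) being standard, is accurate and is presumably what the authors have in mind when they call the theorem a corollary of that reference.
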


\subsection{Topological joinings} \label{Sect:TopJoinings} 

Suppose $(X,T)$ and $(Y,S)$ are $G$-systems. A \textit{topological joining} of these systems is a $G$-system $(Z,R)$ such that $Z \subset X \times Y$, the action $R$ is defined by $R^g(x,y) = (T^g(x), S^g(y))$, and the natural projections of $Z$ onto each its components gives factor maps onto $(X,T)$ and $(Y,S)$, respectively. For any such topological joining, we note that $h(R) \leq h(S) + h(T)$ (as $(Z,R)$ is a subsystem of the direct product system, which has entropy $h(S) + h(T)$).
 
Here we describe a specific construction of topological joinings that we use in some of our proofs. Suppose that $(X,T)$, $(Y,S)$, and $(Z,R)$ are three $G$-systems, and $\pi : X \to Y$ and $\varphi : X \to Z$ are factor maps. Then we define $J(\pi,\varphi)$, the \textit{topological joining of $\pi$ and $\phi$}, to be another $G$-system $(W,Q)$, defined as follows. The space $W$ is
\begin{equation*}
W = \bigl\{ (\pi(x), \varphi(x)) : x \in X \bigr\},
\end{equation*}
and the action $Q$ is given by $Q^g(y,z) = (S^g(y), R^g(z))$. 
Note that there are three factor maps associated to this construction:
\begin{itemize}
\item $\psi : X \to W$, where $\psi(x) = (\pi(x), \varphi(x))$, 
\item $\pi_1 : W \to Y$, where $\pi_1(y,z) = y$, and 
\item $\pi_2 : W \to Z$, where $\pi_2(y,z) = z$. 
\end{itemize}

We state the following fact for future use. 
\begin{lemma} \label{Lemma:SpecialFlower}
Suppose $(X_0,T_0)$, $(X_1,T_1)$, and $(Y,S)$ are $G$-systems and $\pi : X_0 \to X_1$ is a factor map. If $(W_i,R_i)$ is a topological joining of $(X_i,T_i)$ and $(Y,S)$ for $i=0,1$ and $\varphi : W_0 \to W_1$ is the factor map defined by $\varphi(x,y) = (\pi(x),y)$, then $h(R_0 \mid R_1) \leq h(T_0 \mid T_1)$. 
\end{lemma}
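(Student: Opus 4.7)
The plan is to estimate $h(R_0, \mathcal{U} \mid R_1)$ for each finite open cover $\mathcal{U}$ of $W_0$ by a conditional entropy coming from a cover of $X_0$, and then take the supremum in the definition of $h(R_0 \mid R_1)$. Let $p : W_0 \to X_0$ and $q : W_0 \to Y$ denote the coordinate projections, which are factor maps intertwining $R_0$ with $T_0$ and $S$ respectively. Using the product topology on $X_0 \times Y$ and a Lebesgue-number argument for $\mathcal{U}$ on the compact space $W_0$, I would first produce finite open covers $\mathcal{V}$ of $X_0$ and $\mathcal{W}$ of $Y$ such that every set of the form $p^{-1}(V) \cap q^{-1}(W)$ (with $V \in \mathcal{V}$, $W \in \mathcal{W}$) lies inside some element of $\mathcal{U}$. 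Setting $\mathcal{U}' = p^{-1}(\mathcal{V}) \vee q^{-1}(\mathcal{W})$, this cover refines $\mathcal{U}$, so the same refinement relation holds between $\mathcal{U}'_{F_n}$ and $\mathcal{U}_{F_n}$ (since the action is by homeomorphisms). As refinements can only increase the minimal size of a subcover of any subset, we obtain $h(R_0, \mathcal{U} \mid R_1) \leq h(R_0, \mathcal{U}' \mid R_1)$.

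The second step uses that $p$ and $q$ commute with the actions to rewrite
\[
\mathcal{U}'_{F_n} = p^{-1}(\mathcal{V}_{F_n}) \vee q^{-1}(\mathcal{W}_{F_n}),
\]
whose elements are exactly the sets $p^{-1}(V) \cap q^{-1}(W)$ for $V \in \mathcal{V}_{F_n}$ and $W \in \mathcal{W}_{F_n}$. The key geometric observation is that for any $(x_1, y) \in W_1$ one has $\varphi^{-1}(x_1, y) \subset \pi^{-1}(x_1) \times \{y\}$, so covering $\varphi^{-1}(x_1, y)$ with elements of $\mathcal{U}'_{F_n}$ reduces to covering $\pi^{-1}(x_1)$ with elements of $\mathcal{V}_{F_n}$, since in the $Y$-coordinate one only needs to select a single $W \in \mathcal{W}_{F_n}$ containing $y$. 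Pairing a minimal $\mathcal{V}_{F_n}$-subcover of $\pi^{-1}(x_1)$ with that single $W$ yields
\[
N(\mathcal{U}'_{F_n} \mid \varphi^{-1}(x_1, y)) \leq N(\mathcal{V}_{F_n} \mid \pi^{-1}(x_1)).
\]

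To finish, I would take the supremum over $w = (x_1, y) \in W_1$ (using that the projection $W_1 \to X_1$ is surjective to identify the range of $x_1$ with all of $X_1$), apply $\log$, divide by $|F_n|$, and pass to the limit; this gives $h(R_0, \mathcal{U}' \mid R_1) \leq h(T_0, \mathcal{V} \mid T_1) \leq h(T_0 \mid T_1)$. Chaining with the earlier inequality and then taking the supremum over $\mathcal{U}$ yields $h(R_0 \mid R_1) \leq h(T_0 \mid T_1)$. The only step that is not a direct manipulation of definitions is the initial Lebesgue-number reduction to a product-form cover, and this is standard; I do not anticipate any substantial obstacle.
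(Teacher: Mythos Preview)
Your proof is correct and follows essentially the same strategy as the paper: reduce to a product-form cover and exploit the fiber description $\varphi^{-1}(x_1,y)\subset \pi^{-1}(x_1)\times\{y\}$ to bound $N(\,\cdot\mid\varphi^{-1}(x_1,y))$ by $N(\mathcal{V}_{F_n}\mid\pi^{-1}(x_1))$. The only difference is in how the product-form cover is obtained: the paper projects $\mathcal{U}$ onto the two factors, whereas you refine $\mathcal{U}$ by a product cover via a Lebesgue-number argument; your route is the more careful one, since it guarantees the refinement inequality $N(\mathcal{U}_{F_n}\mid\cdot)\le N(\mathcal{U}'_{F_n}\mid\cdot)$ in the needed direction.
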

\begin{proof}
Let $\mathcal{U}$ be a finite open cover of $W_0$. Since the projection maps from $W_0$ onto $X_0$ and $Y$ are open maps, the projections of $\mathcal{U}$ onto these factors yield open covers $\mathcal{U}^0$ of $X_0$ and $\mathcal{U}^Y$ of $Y$. Also, for any $(x,y) \in W_1$, note that $\varphi^{-1}(x,y) = \pi^{-1}(x) \times \{y\}$. Hence, for any $n$, we have $N( \mathcal{U}_{F_n} \mid \varphi^{-1}(x,y) ) \leq N( \mathcal{U}^0_{F_n} \mid \pi^{-1}(x) ) \cdot N( \mathcal{U}^Y_{F_n} \mid \{y\}) = N( \mathcal{U}^0_{F_n} \mid \pi^{-1}(x) )$. Then the desired inequality follows from the definition of topological conditional entropy.
\end{proof}

\subsection{Zero-dimensional systems} 
A $G$-system $(X,T)$ is said to be \textit{zero-dimensional} if $X$ has a (topological) basis consisting of clopen sets. Suppose that $X$ is a zero-dimensional $G$ system. Then for any $\epsilon >0$, the space $X$ has a finite clopen partition $\mathcal{P}$ with $\diam(\mathcal{P})<\epsilon$. 
For $x \in X$, let $\mathcal{P}(x)$ be the element of $\mathcal{P}$ containing $x$. Then define a map $\pi_{\mathcal{P}} : X \to \mathcal{P}^G$ by the rule $\pi_{\mathcal{P}}(x)(g) = \mathcal{P}(T^g(x))$, and note that $\pi$ is a factor map from $X$ onto $X_{\mathcal{P}} := \pi_{\mathcal{P}}(X)$. 
In fact, by choosing $\epsilon > 0$ sufficiently small, one may ensure that the entropy of the subshift $X_{\mathcal{P}}$ is arbitrarily close to the entropy of  $(X,T)$. 

\subsection{Inverse limits}\label{invsec} 


\begin{defn}
Suppose $\{(Z_n,R_n)\}_{n=0}^{\infty}$ is a sequence of $G$-systems, and for each $n \geq 1$, we have a factor map $\pi_n : Z_n \to Z_{n-1}$. The inverse limit system $(Z,R) = \varprojlim (Z_n,R_n)$ is defined as follows: 
\begin{equation*}
Z = \left\{ (z_0,z_1,z_2,\dots) \in \prod_{n=0}^{\infty} Z_n : \forall n \geq 1, \pi_n(z_n) = z_{n-1} \right\},
\end{equation*}
and  $R^g( z_0,z_1,z_2,\dots) = (R_0^g(z_0), R_1^g(z_1), R_2^g(z_2), \dots)$. 
\end{defn}

We remark that since each $(Z_n,R_n)$ is a $G$-system, so is the inverse limit system $(Z,R)$.
%
Furthermore, if each $Z_n$ is zero-dimensional, then $Z$ is zero-dimensional as well.
We require the following lemma about entropy of inverse limits; for a proof in the case $G = \mathbb{Z}$, see \cite[Lemma 4.9]{linden}. The straight-forward adaptation of this proof for general countable amenable groups is left to the reader.

\begin{lemma} \label{Lemma:InvLimitEntropy}
For any inverse limit system $(Z,R) = \varprojlim (Z_n,R_n)$, we have
\begin{equation*}
h(R) = \lim_n h(R_n).
\end{equation*}
\end{lemma}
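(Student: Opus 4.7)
The plan is to prove both inequalities $h(R) \geq \lim_n h(R_n)$ and $h(R) \leq \lim_n h(R_n)$, and to observe first that the limit on the right exists. To see the latter, note that each $\pi_n : Z_n \to Z_{n-1}$ is a factor map, so by the monotonicity of entropy under factor maps (discussed immediately before Theorem~\ref{bowenfiber}), the sequence $h(R_n)$ is non-decreasing, hence the limit exists as an element of $[0,\infty]$ and equals $\sup_n h(R_n)$. For the lower bound, for each $n$, the projection $p_n : Z \to Z_n$ defined by $p_n(z_0,z_1,\ldots) = z_n$ is continuous, commutes with the actions, and is surjective (this last point uses that $Z_n$ is compact Hausdorff and each $\pi_k$ is surjective, which forces the inverse limit to surject onto every $Z_n$). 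Thus $p_n$ is a factor map and $h(R) \geq h(R_n)$, giving $h(R) \geq \lim_n h(R_n)$.

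The upper bound is the substantive step. The idea is that the topology on $Z$ is generated by cylinder sets $p_n^{-1}(U)$ with $U \subset Z_n$ open, so any finite open cover of $Z$ can be refined to a pullback from some single $Z_N$. Concretely, I would let $\mathcal{U}$ be any finite open cover of $Z$, and by compactness of $Z$ and the fact that sets of the form $p_n^{-1}(U)$ form a basis for the topology, find a finite refinement of $\mathcal{U}$ of the form $\{p_{n_i}^{-1}(U_{n_i})\}_{i=1}^r$. Setting $N = \max_i n_i$ and $V_i = (\pi_{n_i+1}\circ \cdots \circ \pi_N)^{-1}(U_{n_i})$, each set rewrites as $p_N^{-1}(V_i)$. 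Since $p_N$ is surjective, $\{V_i\}$ is an open cover of $Z_N$; call it $\mathcal{V}$. Then $p_N^{-1}(\mathcal{V})$ refines $\mathcal{U}$.

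Next I would verify $h(R, p_N^{-1}(\mathcal{V})) = h(R_N, \mathcal{V})$. Using $p_N \circ R^g = R_N^g \circ p_N$, we get $(p_N^{-1}\mathcal{V})_{F_n} = p_N^{-1}(\mathcal{V}_{F_n})$. Surjectivity of $p_N$ then implies $N((p_N^{-1}\mathcal{V})_{F_n}) = N(\mathcal{V}_{F_n})$ (any subcover upstairs corresponds bijectively to a subcover downstairs of the same cardinality, and vice versa), so the two entropies coincide. Combining with the refinement inequality $h(R,\mathcal{U}) \leq h(R, p_N^{-1}(\mathcal{V}))$ yields
\begin{equation*}
h(R, \mathcal{U}) \leq h(R_N, \mathcal{V}) \leq h(R_N) \leq \lim_n h(R_n).
\end{equation*}
Taking the supremum over $\mathcal{U}$ gives $h(R) \leq \lim_n h(R_n)$, completing the proof.

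The main obstacle is the refinement step: justifying that any finite open cover of $Z$ admits a refinement consisting of cylinders $p_N^{-1}(V_i)$ for a single index $N$. This is the only place one needs to invoke compactness of $Z$ together with the explicit basis for the inverse-limit topology, and it is what makes the proof go through uniformly for all countable amenable $G$ (the group-theoretic content is entirely absorbed into the definition of $h(R_N, \mathcal{V})$ via the F\o lner sequence, exactly as in the $G = \mathbb{Z}$ case).
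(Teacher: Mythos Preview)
Your proof is correct and is precisely the standard argument the paper defers to: the paper does not supply its own proof here, instead citing \cite[Lemma 4.9]{linden} for $G=\mathbb{Z}$ and leaving the amenable adaptation to the reader, and what you have written is exactly that adaptation. The monotonicity of $h(R_n)$, the lower bound via the projections $p_n$, and the upper bound via refining an arbitrary finite open cover by a pulled-back cover $p_N^{-1}(\mathcal{V})$ are all handled correctly, with the F\o lner sequence playing the same role it does for $\mathbb{Z}$.
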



Finally, we will need the following simple lemma, which shows that factor maps can be carried to the inverse limit; the proof is standard and left to the reader.

\begin{lemma} \label{Lemma:InvLimitFactor}
Suppose that $X$ is a $G$-system with a surjective factor map $\varphi_0 : X \to Z_0$ and that for each $n \geq 1$, there exist surjective factor maps $\varphi_n : X \to Z_n$ and $\pi_n : Z_n \to Z_{n-1}$ such that $\varphi_{n-1} = \pi_n \circ \varphi_n$. 
Define the map $\varphi : X \to Z$ by the rule $\varphi(x) = (\varphi_0(x),\varphi_1(x),\dots)$. Then $\varphi$ is a surjective factor map.
\end{lemma}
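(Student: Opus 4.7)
The plan is to verify the four properties needed to conclude that $\varphi$ is a surjective factor map: it maps into $Z$, it is continuous, it is $G$-equivariant, and it is surjective. The first three are essentially bookkeeping, while surjectivity is the only step requiring a nontrivial idea (compactness).

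First I would verify that $\varphi(x) \in Z$ for every $x \in X$. The defining hypothesis $\varphi_{n-1} = \pi_n \circ \varphi_n$ gives $\pi_n(\varphi_n(x)) = \varphi_{n-1}(x)$ for every $n \geq 1$, which is exactly the compatibility relation defining the inverse limit. Continuity of $\varphi$ into the product $\prod_n Z_n$ follows immediately from the continuity of each $\varphi_n$, and the subspace topology on $Z$ then gives continuity of $\varphi : X \to Z$. Equivariance is similarly coordinate-wise: for each $g \in G$ and each $n$, $\varphi_n \circ T^g = R_n^g \circ \varphi_n$, and combining these coordinates yields $\varphi \circ T^g = R^g \circ \varphi$.

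The main step is surjectivity. Fix $z = (z_0, z_1, z_2, \dots) \in Z$ and consider the closed sets $K_n := \varphi_n^{-1}(z_n) \subset X$. To produce $x \in X$ with $\varphi(x) = z$, it suffices to show $\bigcap_{n \geq 0} K_n \neq \varnothing$, and since $X$ is compact this reduces to checking the finite intersection property. Given any finite set of indices, let $N$ be the largest one. Iterating the hypothesis gives $\varphi_n = \pi_{n+1} \circ \pi_{n+2} \circ \cdots \circ \pi_N \circ \varphi_N$ for each $n < N$, so for any $x \in K_N$, $\varphi_n(x) = \pi_{n+1} \circ \cdots \circ \pi_N(z_N) = z_n$ by the inverse limit relation on $z$. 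Thus $K_N \subset K_n$ for every $n \leq N$, and surjectivity of $\varphi_N$ ensures $K_N$ is nonempty. Hence the family $\{K_n\}$ has the finite intersection property, and any $x \in \bigcap_n K_n$ satisfies $\varphi(x) = z$.

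The only step that could be considered an obstacle is the surjectivity argument, but it is essentially immediate once one observes that the compatibility relations collapse all the constraints $\varphi_n(x) = z_n$ (for $n \leq N$) into the single constraint $\varphi_N(x) = z_N$.
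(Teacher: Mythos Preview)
Your proof is correct and is precisely the standard argument the paper has in mind; the paper itself omits the proof entirely, stating only that it ``is standard and left to the reader.'' Your verification of well-definedness, continuity, and equivariance is routine, and your surjectivity argument via nested closed preimages and compactness is the expected one.
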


\subsection{Periodic patterns}

\begin{defn}
For any finite alphabet $\mathcal{A}$, countable amenable $G$, $S, T \subset G$ with $S$ finite, $k \in \N$, and $w \in \mathcal{A}^T$, we say that $w$  \textit{has $k$ periods from $S$} if there exists a collection $\{s_1,\dots,s_k\}$ of $k$ distinct elements of $S$ such that $w_g = w_{gs_i}$ whenever $g$ and $gs_i$ are both in $T$. 
In this case, we may refer to $\{s_1,\dots,s_k\}$ as a \textit{period set} for $w$.
\end{defn}

\begin{lemma} \label{Lemma:PeriodicPatterns}
Let $k \in \N$, and let $S \subset G$ be a finite set with $|S| \geq k$. Let $T \subset G$ be any set such that 
\begin{equation*}
k \log_{|\mathcal{A}|} |S| < \frac{|T|}{2k}
\end{equation*}
and for each $s \in S$,
\begin{equation*}
|T \triangle Ts| < \frac{ |T| }{2k^2}. 
\end{equation*}
Then
\begin{equation*}
\Bigl| \bigl\{ w \in \mathcal{A}^{T} : w \text{ has $k$ periods from $S$} \bigr\} \Bigr| \leq |\mathcal{A}|^{2 |T| / k}.
\end{equation*}
\end{lemma}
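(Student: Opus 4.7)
The plan is to union-bound over possible period sets and, for each fixed period set, bound the number of compatible patterns by a graph-theoretic argument. There are at most $|S|^k$ candidate period sets $P = \{s_1,\dots,s_k\} \subseteq S$. For each such $P$, I would consider the undirected graph $\Gamma_P$ with vertex set $T$ whose edges are $\{g, gs_i\}$ for those pairs with both endpoints in $T$. The periodicity condition $w_g = w_{gs_i}$ forces any compatible $w$ to be constant on each connected component of $\Gamma_P$, so the number of patterns with period set $P$ is at most $|\mathcal{A}|^{c(\Gamma_P)}$, where $c(\Gamma_P)$ is the number of components.

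The main step is to show $c(\Gamma_P) \leq 3|T|/(2k)$. I would call $g \in T$ \emph{interior} if $gs_i \in T$ and $gs_i^{-1} \in T$ for every $i$. Using that $|T \setminus Ts| = |T \setminus Ts^{-1}| = |T \triangle Ts|/2$ (the first equality via the bijection $x \mapsto xs$), each of $|T \setminus Ts_i|$ and $|T \setminus Ts_i^{-1}|$ is less than $|T|/(4k^2)$, and a union bound over $i$ gives at most $|T|/(2k)$ non-interior vertices. For an interior $g$, the $k+1$ elements $g, gs_1, \dots, gs_k$ are distinct (or at least $k$ of them are, if $e \in P$) and all lie in the same component of $\Gamma_P$, so every component that meets the interior has size at least $k$; hence at most $|T|/k$ such components exist. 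The remaining components consist entirely of non-interior vertices, so they number at most $|T|/(2k)$. Summing gives $c(\Gamma_P) \leq 3|T|/(2k)$.

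Combining yields the total bound
\[
|S|^k \cdot |\mathcal{A}|^{3|T|/(2k)} \;<\; |\mathcal{A}|^{|T|/(2k) + 3|T|/(2k)} \;=\; |\mathcal{A}|^{2|T|/k},
\]
where the strict inequality uses the first hypothesis $k \log_{|\mathcal{A}|} |S| < |T|/(2k)$. The main obstacle is the bound on $c(\Gamma_P)$: the naive estimate $c(\Gamma_P) \leq c(\Gamma_{P,s_i})$ using a single generator is too weak when the $s_i$ have small order (e.g.\ $G = (\mathbb{Z}/2\mathbb{Z})^{\infty}$ with $s_i = e_i$, where each single-generator graph has roughly $|T|/2$ components regardless of $k$). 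Demanding that an interior vertex witness all $k$ generators simultaneously is precisely what forces components of size $\geq k$ and produces a bound that improves with $k$.
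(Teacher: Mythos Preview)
Your proof is correct and follows essentially the same approach as the paper: union-bound over period sets $P$, build the graph on $T$ with edges given by $P$, show that vertices in small components (equivalently, non-interior vertices) are few because they lie in the ``boundary'' $\bigcup_{p} (T \setminus Tp)$, and count large components by pigeonhole. The only cosmetic differences are that the paper argues directly with small components rather than introducing an ``interior'' notion (and so needs only the one-sided condition $gp^{-1} \in T$, whereas your two-sided definition is slightly redundant but harmless after you halve $|T \triangle Ts|$), and the paper routes the arithmetic through an auxiliary parameter $\delta < 1/(2k)$ rather than plugging in $1/(2k)$ directly.
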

\begin{proof}
Let $k \in \N$, and let $S \subset G$ be a finite set with $|S| \geq k$. Let $T$ be as above. To simplify the notation in this proof, let $\delta >0$ be such that $\delta < k^{-1}/2$ and such that
\begin{equation*}
k \log_{|\mathcal{A}|} |S|  < \delta |T|,
\end{equation*}
and for each $s \in S$,
\begin{equation*}
|T \triangle Ts| < \frac{ \delta |T| }{k}. 
\end{equation*}

Now let $P \subset S$ such that $|P| = k$. First, define a finite undirected graph with vertex set $V = T$ and edge set $E \subset T \times T$, where $(g,h) \in E$ if there exists $p \in P$ such that $h = gp$ or $g = hp$.
Let $C \subset T$ be the set of vertices corresponding to an arbitrary connected component of $(V,E)$ such that $|C| < k$ (which might not exist). Let $g \in C$. If $p \in P$ and $h = gp^{-1} \in T$, then $g = hp$, which means $(g,h) \in E$, and then $gp^{-1} = h \in C$. Since $|C| < k = |g P^{-1}|$, there exists $p \in P$ such that $gp^{-1} \notin T$, i.e., $g \in T \setminus Tp$. We now conclude that for any connected component $C$ of $(V,E)$ with $|C| < k$, we have
\begin{equation*}
C \subset \bigcup_{p \in P} \bigl(T \setminus Tp \bigr).
\end{equation*}
For $g \in T$, let $C(g)$ denote the connected component of $(V,E)$ containing $g$. Then the above containment and our second hypothesis on $T$ combine to give
\begin{align*}
\Bigl| \bigl\{ g \in T :  |C(g)| < k  \bigr\} \Bigr|  \leq \sum_{p \in P} |T \setminus Tp|  < \frac{\delta |T|}{k} |P| =  \delta |T|.
\end{align*}
Let $N_{\ell}$ be the number of connected components of $(V,E)$ with cardinality $\ell$.
We have
\begin{align*}
|T| & \geq \Bigl| \bigl\{ g \in T : |C(g)| \geq k \bigr\} \Bigr| \\
& = \sum_{\ell = k}^{|T|} N_{\ell} \cdot \ell \\
& \geq k \sum_{\ell = k} N_{\ell},
\end{align*}
and therefore the number of connected components of cardinality at least $k$ is bounded above by $|T|/k$. 

Now we turn to counting patterns in $\mathcal{A}^T$ with period set $P$.
Notice that if $w \in \mathcal{A}^T$ has $P$ as a period set, then $w$ must be constant on any connected component of the graph $(V,E)$. Then the estimates in the previous paragraph yield that
\begin{align*}
\Bigl| \bigl\{ w \in \mathcal{A}^T : P \text{ is a period set for } w \bigr\} \Bigr| & \leq |\mathcal{A}|^{|T|/k} \cdot |\mathcal{A}|^{\delta |T|} = |\mathcal{A}|^{(k^{-1}+\delta) |T|}.
\end{align*}

Finally, let us estimate the number of patterns in $\mathcal{A}^T$ with $k$ periods from $S$. Using the previous estimates and our first hypothesis on $T$, we have
\begin{align*}
\Bigl| \bigl\{ w \in \mathcal{A}^T : & \,  w \text{ has $k$ periods from }  S \bigr\} \Bigr| \\
& \leq \sum_{\substack{P \subset S \\ |P| = k}} \Bigl| \bigl\{ w \in \mathcal{A}^T : P \text{ is a period set for } w \bigr\} \Bigr|  \\
& \leq |\mathcal{A}|^{(k^{-1}+\delta) |T|} \cdot  \bigl| \bigl\{ P \subset S : |P| = k \bigr\} \bigr| \\
& \leq  |\mathcal{A}|^{(k^{-1}+\delta) |T|}  \cdot |S|^k \\
& \leq  |\mathcal{A}|^{(k^{-1}+\delta) |T|}  \cdot |\mathcal{A}|^{\delta |T|} \\
& =  |\mathcal{A}|^{(k^{-1}+2\delta) |T|}. 
\end{align*}
Since $\delta < k^{-1}/2$, the proof of the lemma is complete. 
\end{proof}

\subsection{Other preliminaries}

We denote the usual binary entropy function by $H : [0,1] \to \R$, where $H(x) = - x \log( x) - (1-x) \log(1-x)$, with the convention that $0 \cdot \log 0 = 0$. 
The following fact is elementary and presented without proof. 
\begin{lemma} \label{Lemma:Grinch}
For any $n$ and $\alpha < 1/2$, we have
\begin{equation*}
\sum_{k=0}^{\lfloor \alpha n \rfloor} \binom{n}{k} \leq 2^{H(\alpha) n}.
\end{equation*}
\end{lemma}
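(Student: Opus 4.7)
The plan is to use the classical binomial trick: expand $(\alpha + (1-\alpha))^n = 1$ and isolate the relevant tail. Writing out the full binomial expansion gives
\begin{equation*}
1 = \sum_{k=0}^n \binom{n}{k} \alpha^k (1-\alpha)^{n-k} \;\geq\; \sum_{k=0}^{\lfloor \alpha n \rfloor} \binom{n}{k} \alpha^k (1-\alpha)^{n-k},
\end{equation*}
so it suffices to produce a uniform lower bound on the weight $\alpha^k (1-\alpha)^{n-k}$ for $k \leq \alpha n$.

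The key observation is that because $\alpha < 1/2$, the ratio $\alpha/(1-\alpha)$ is strictly less than $1$. Hence the function $k \mapsto \alpha^k (1-\alpha)^{n-k} = (1-\alpha)^n \cdot (\alpha/(1-\alpha))^k$ is monotonically decreasing in $k$, and its minimum on $\{0, 1, \dots, \lfloor \alpha n \rfloor\}$ is attained at $k = \lfloor \alpha n \rfloor$, with value at least $\alpha^{\alpha n}(1-\alpha)^{(1-\alpha) n}$. Taking logarithms base $2$ this equals exactly $2^{-H(\alpha) n}$, under the convention that $H$ is defined using $\log_2$ (which matches the form $2^{H(\alpha)n}$ on the right-hand side of the lemma).

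Substituting this lower bound back into the previous display yields
\begin{equation*}
1 \;\geq\; 2^{-H(\alpha)n} \sum_{k=0}^{\lfloor \alpha n \rfloor} \binom{n}{k},
\end{equation*}
and rearranging gives the desired inequality. There is no real obstacle here; the only minor care needed is to verify monotonicity of $(\alpha/(1-\alpha))^k$, which holds precisely because $\alpha < 1/2$, and to confirm the identity $\alpha^{\alpha n}(1-\alpha)^{(1-\alpha) n} = 2^{-H(\alpha)n}$ from the definition of $H$.
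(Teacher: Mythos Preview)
Your argument is correct and is the standard proof of this elementary inequality. The paper itself states the lemma without proof (``The following fact is elementary and presented without proof''), so there is nothing to compare against; your approach via the binomial identity $1 = \sum_k \binom{n}{k}\alpha^k(1-\alpha)^{n-k}$ together with the monotonicity of $k \mapsto (\alpha/(1-\alpha))^k$ for $\alpha < 1/2$ is exactly the usual one-line justification.
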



\section{Marker Lemma}\label{marker}

Here we will prove a Marker Lemma for countable groups which generalizes the classical Krieger Marker Lemma (\cite{Boyle1983}, \cite{Krieger1982}).
First, we require a definition.

\begin{defn}
Let $\mathcal{F}$ be a finite collection of sets. For $k \in \N$, we say that $\mathcal{F}$ is $k$-\textit{fold disjoint} if for all collections $\{F_1,\dots,F_k \}$ of $k$ distinct elements of $\mathcal{F}$, we have
\begin{equation*}
\bigcap_{\ell = 1}^k F_{\ell} = \varnothing.
\end{equation*}
\end{defn}

Now we state our general Marker Lemma.  Note that it does not require amenability of the group $G$.
\begin{lemma}\label{markerlemma}
Let $G$ be a countable group, and let $X$ be a $G$-subshift.
Let $S \subset G$ and $T \subset G$ be finite, and let $1 \leq k \leq |S|$. Then there exists a clopen set $F \subset X$ such that
\begin{enumerate}
\item the collection $\{ \sigma^s(F) : s \in S \}$ is $(k+1)$-fold disjoint, and
\item if 
\begin{equation*}
x \notin \bigcup_{s \in S^{-1} S} \sigma^s(F),
\end{equation*}
then $x(T)$ has $k$ periods from $S^{-1} S$.
\end{enumerate}
\end{lemma}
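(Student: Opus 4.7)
The plan is to construct $F$ by a finite greedy procedure, with one step for each element of the finite set
\[
V \;=\; \{w \in \mathcal{L}_T(X) : w \text{ has fewer than } k \text{ periods from } S^{-1}S\}.
\]
Enumerate $V = \{w^{(1)},\dots,w^{(m)}\}$, let $F_0 = \varnothing$, and for $n = 1,\dots,m$ set
\[
U_n \;=\; [w^{(n)}] \setminus \bigcup_{s \in S^{-1}S} \sigma^s(F_{n-1}), \qquad F_n \;=\; F_{n-1} \cup U_n.
\]
Each $U_n$, and hence each $F_n$, is a finite Boolean combination of cylinders and of the clopen set $F_{n-1}$, so is clopen. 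I take $F = F_m$.

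I verify (1) by induction on $n$: $F_0 = \varnothing$ trivially satisfies it, and assuming $F_{n-1}$ does, distributing $F_n = F_{n-1} \cup U_n$ expresses $\bigcap_{i=1}^{k+1} \sigma^{s_i}(F_n)$ as a union of $2^{k+1}$ terms indexed by $I \subseteq \{1,\dots,k+1\}$. The term for $I = \varnothing$ is empty by hypothesis. For any $I$ with both $I$ and $I^c$ nonempty, pick $i \in I$ and $j \in I^c$; if $z$ belongs to this term then $y := \sigma^{-s_i}(z) \in U_n$ while $\sigma^{-s_j}(z) = \sigma^{s_j^{-1} s_i}(y) \in F_{n-1}$, so $y \in U_n \cap \sigma^{s_i^{-1} s_j}(F_{n-1})$ with $s_i^{-1} s_j \in S^{-1}S$, contradicting the construction of $U_n$. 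For the term $I = \{1,\dots,k+1\}$, any $y \in \bigcap_{i=1}^{k+1} \sigma^{s_i}(U_n) \subseteq \bigcap_{i=1}^{k+1} \sigma^{s_i}([w^{(n)}])$ satisfies $y(h s_i^{-1}) = w^{(n)}(h)$ for all $h \in T$ and all $i$, and equating the relations at $i = 1$ and $i = j$ yields $w^{(n)}(h) = w^{(n)}(h s_1^{-1} s_j)$ whenever $h, h s_1^{-1} s_j \in T$, exhibiting the $k+1$ distinct elements $\{s_1^{-1} s_j : j = 1,\dots,k+1\} \subseteq S^{-1}S$ as periods of $w^{(n)}$, contradicting $w^{(n)} \in V$.

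For (2), set $B_n = (X \setminus \bigcup_{s \in S^{-1}S} \sigma^s(F_n)) \cap \bigcup_{w \in V}[w]$. I claim $B_n \cap [w^{(j)}] = \varnothing$ for every $j \le n$, by induction on $n$: the case $j < n$ follows from $B_n \subseteq B_{n-1}$, and at $j = n$ any $x \in [w^{(n)}]$ lies either already in $\bigcup_{s \in S^{-1}S} \sigma^s(F_{n-1})$ or in $U_n \subseteq F_n$, so in either case $x \in \bigcup_s \sigma^s(F_n)$ and $x \notin B_n$. Thus $B_m$ is disjoint from $\bigcup_j [w^{(j)}]$ while being contained in it, forcing $B_m = \varnothing$, which is exactly conclusion (2).

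The main subtlety is the all-$U_n$ case of (1): the pattern-based choice $U_n \subseteq [w^{(n)}]$ is precisely what lets me translate an over-full intersection into an excess of periods for $w^{(n)}$ and then contradict $w^{(n)} \in V$. A looser clopen-geometric construction would need a separate argument in that case, and a Zorn-style maximality argument would have to grapple with the fact that unions of chains of clopen sets need not be clopen; the greedy finite construction sidesteps both difficulties.
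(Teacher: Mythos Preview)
Your proof is correct and follows essentially the same approach as the paper: the sets $U_n$ coincide exactly with the paper's $A_n$, and your verifications of (1) and (2) use the same underlying ideas (the paper's argument that all $i(p)$ must coincide is your ``mixed $I$'' case, and its treatment of the common cylinder is your $I=\{1,\dots,k+1\}$ case). The only cosmetic differences are that you organize (1) as an induction on $n$ with a distributive case split rather than arguing directly for $F=F_m$, and you exhibit $k+1$ periods (including $e=s_1^{-1}s_1$) where the paper exhibits $k$; both yield the same contradiction.
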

The classical Krieger Marker Lemma corresponds to the case $G = \mathbb{Z}$, $k = 1$, $S = [0,n]$, and $T = [-n, n]$ for some $n \in \mathbb{N}$. Our use for arbitrary $k$ can be thought of as allowing shifts of the marker set to have weaker disjointness properties in exchange for stronger periodicity properties away from it.
\begin{proof}
Let $\mathcal{N} = \{w_1,\dots,w_r\}$ be an enumeration of the patterns $w$ in $\mathcal{A}^T$ such that $w$ \textit{does not} have $k$ periods from $S^{-1} S$. 
Inductively define the following sets. Let $A_1 = [w_1]$. For $i = 1,\dots,r-1$, let
\begin{equation*}
A_{i+1} = [w_{i+1}] \setminus \Biggl( \bigcup_{j=1}^i \bigcup_{s \in S^{-1} S} \sigma^s (A_j) \Biggr).
\end{equation*}
Let $F = \bigcup_{i=1}^r A_i$. Note that $F$ is clopen.

To establish (1), suppose for contradiction that there exists $P = \{p_1,\dots,p_{k+1}\} \subset S$ with $|P| = k+1$ and there exists $x \in X$ with
\begin{equation*}
x \in \bigcap_{p \in P} \sigma^p(F). 
\end{equation*}
Then for each $p \in P$, we have that $\sigma^{p^{-1}}(x) \in F = \bigcup_i A_i$. For each $p \in P$, choose $i(p) \in \{1,\dots,r\}$ such that $\sigma^{p^{-1}}(x) \in A_{i(p)}$. We claim that there exists $w \in \mathcal{N}$ such that $\sigma^{p^{-1}}(x) \in [w]$ for all $p \in P$. To see this, suppose for contradiction that there exist $p, q \in P$ such that $i(p) \neq i(q)$. Assume without loss of generality that $i(p) < i(q)$ (otherwise reverse the roles of $p$ and $q$). Then
\begin{equation*}
\sigma^{q^{-1}}(x) \in A_{i(q)} \cap \bigl( \sigma^{q^{-1} p} (A_{i(p)}) \bigr),
\end{equation*}
which gives a contradiction, since $q^{-1}p \in S^{-1} S$ and $A_{i(q)}$ is defined to be disjoint from $\sigma^{s}(A_{i(p)})$ for all $s \in S^{-1} S$. 
Hence, there exists $w \in \mathcal{N}$ such that $\sigma^{p^{-1}}(x) \in [w]$ for all $p \in P$. Therefore
\begin{equation*}
\sigma^{p_1^{-1}}(x) \in [w] \cap \sigma^{p_1^{-1} p_2} [w] \cap \dots \cap \sigma^{p_1^{-1} p_{k+1}}[w].
\end{equation*}
Since $|P| = k+1$, the non-emptiness of the intersection in the previous display gives that the set $\{p_1^{-1} p_2,\dots,p_1^{-1} p_{k+1} \}$ is a set of $k$ periods from $S^{-1}S$ for $w$. However, this contradicts the fact that $w \in \mathcal{N}$. Thus we have established (1). 

Now let $x \in X$ such that $x(T)$ \textit{does not} have $k$ periods from $S^{-1} S$. Then $x(T) = w_i$ for some $i = 1, \dots, r$. If $i = 1$, then $x \in F$. If $i >1$, then either $x \in A_i \subset F$ or else there exists $j < i$ and $s \in S^{-1}S$ such that $x \in \sigma^s(A_j)$. In all cases, we obtain that
\begin{equation*}
x \in \bigcup_{s \in S^{-1} S} \sigma^s(F).
\end{equation*}
Taking the contrapositive, we conclude that if
\begin{equation*}
x \notin \bigcup_{s \in S^{-1} S} \sigma^s(F),
\end{equation*}
then $x(T)$ has $k$ periods from $S^{-1}S$.
This establishes (2) and finishes the proof.
\end{proof}

\section{Density}\label{density}

In this section, we define some basic notions of (upper) density for subsets of countable amenable $G$ (in terms of our previously chosen F\o lner sequence $F_n$). This will be used to quantify a way in which 
visits to the marker set from Lemma~\ref{markerlemma} are rare when $k$ is taken much smaller than $|S|$.

\begin{defn}
Let $X$ be a $G$-subshift, and let $F \subset X$. For a finite set $E \subset G$ and $x \in X$, let
\begin{equation*}
N_{E}(x,F) = \bigl| \{ g \in E : \sigma^g(x) \in F \} \bigr|.
\end{equation*}
Then let
\begin{equation*}
D_n(F) = \sup_{x \in X} \frac{N_{F_n}(x,F)}{|F_n|},
\end{equation*}
and $\overline{D}(F) = \limsup_n D_n(F)$.
\end{defn}


\begin{defn}
Let $X$ be a $G$-subshift. Let $S \subset G$ be finite, and let $k \in \N$. We say that $F \subset X$ is \textit{$(S,k)$-disjoint} if $\{ \sigma^{s}(F) : s \in S\}$ is $k$-fold disjoint. 
\end{defn}

\begin{lemma} \label{Lemma:Density}
Let $X$ be a $G$-subshift. Let $\{S_1,\dots,S_N\}$ be a collection that $\delta$-quasitiles $G$ with $m = \min_i |S_i|$ and $S = \bigcup_i S_i = S_N$, and let $k \geq 1$. If $F \subset X$ is $(S^{-1},k)$-disjoint, then 
\begin{equation*}
\overline{D}(F) \leq \frac{k (1+\delta)}{(1-\delta)m} + \delta.
\end{equation*}
\end{lemma}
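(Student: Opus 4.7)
The plan is to combine Lemma~\ref{Lemma:Coverings} with a per-tile pigeonhole bound coming from $(S^{-1},k)$-disjointness. Fix $x \in X$ and $n$ large enough so that Lemma~\ref{Lemma:Coverings} produces $C = C(n) \subset G$ with $\{Sc : c \in C\}$ $(1-\delta)$-covering $F_n$ and $|C| \leq \frac{(1+\delta)|F_n|}{(1-\delta)m}$. The goal is then to estimate $N_{F_n}(x,F)$ by breaking $F_n$ into (i) the portion inside $\bigcup_{c \in C} Sc$, on which we will obtain a tile-by-tile bound, and (ii) the remaining portion, which has size at most $\delta |F_n|$.

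The key estimate is the following bound on the number of visits to $F$ inside a single tile. Fix $c \in C$ and set $y = \sigma^c(x)$. For $s \in S$, the identity $\sigma^{sc}(x) = \sigma^s(y)$ gives that $\sigma^{sc}(x) \in F$ if and only if $y \in \sigma^{s^{-1}}(F)$. As $s$ ranges over $S$, the element $s^{-1}$ ranges over $S^{-1}$, so
\begin{equation*}
\bigl| \{ g \in Sc : \sigma^g(x) \in F \} \bigr| \leq \bigl| \{ t \in S^{-1} : y \in \sigma^t(F) \} \bigr| \leq k,
\end{equation*}
where the last inequality uses the $(S^{-1},k)$-disjointness hypothesis, which says precisely that any $k$ distinct members of $\{\sigma^t(F) : t \in S^{-1}\}$ have empty intersection, forcing $y$ to lie in at most $k-1 \leq k$ of them.

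Summing this bound over $c \in C$ and absorbing the uncovered portion of $F_n$ yields
\begin{equation*}
N_{F_n}(x,F) \leq k|C| + \bigl|F_n \setminus \textstyle\bigcup_{c \in C} Sc\bigr| \leq \frac{k(1+\delta)|F_n|}{(1-\delta)m} + \delta |F_n|.
\end{equation*}
Dividing by $|F_n|$, taking the supremum over $x \in X$, and then the $\limsup$ over $n$ yields the claimed estimate on $\overline{D}(F)$. There is no substantive obstacle; the whole argument is essentially bookkeeping around the pigeonhole observation, and the only point requiring care is keeping the inversions straight so that the indexing set $S$ of each tile is correctly matched with the set $S^{-1}$ appearing in the disjointness hypothesis.
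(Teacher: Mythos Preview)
Your proof is correct and follows essentially the same approach as the paper's: invoke Lemma~\ref{Lemma:Coverings} to get the covering set $C$, use the $(S^{-1},k)$-disjointness to bound visits to $F$ within each tile $Sc$ by $k$ (the paper states this as $N_S(\sigma^c(x),F) \leq k$), and then sum over $c \in C$ and add the uncovered remainder of size at most $\delta|F_n|$. The only cosmetic difference is that the paper packages the per-tile bound via the notation $N_S(\cdot,F)$, whereas you unpack it directly; the arithmetic and logic are identical.
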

\begin{proof}
By Lemma \ref{Lemma:Coverings}, for all large enough $n$, there exists $C = C(S,n,\delta)$ such that $\{Sc : c \in C\}$ $(1-\delta)$-covers $F_n$ and
\begin{equation*}
|C| \leq \frac{(1+\delta) |F_n|}{(1-\delta) m}.
\end{equation*}
Also, since $F$ is $(S^{-1},k)$-disjoint, for any $x \in X$, we have
\begin{align*}
N_{S}(x,F) & = | \{ g \in S : \sigma^g(x) \in F\} | \\
& = | \{ g \in S : x \in \sigma^{g^{-1}}(F) \} | \\
& \leq k.
\end{align*}
Then for any $x \in X$, the previous two displays and the fact that $\{Sc : c \in C\}$ $(1-\delta)$-covers $F_n$ gives that
\begin{align*}
N_{F_n}(x,F) & \leq N_{SC}(x,F) + N_{F_n \setminus SC}(x,F) \\
& \leq \sum_{c \in C} N_{Sc}(x,F) + |F_n \setminus SC| \\
& \leq \sum_{c \in C} N_{S}(\sigma^c(x),F) + \delta |F_n| \\
&\leq k |C| + \delta |F_n| \\
& \leq \frac{k (1+\delta) |F_n|}{(1-\delta)m} + \delta |F_n|.
\end{align*}
After dividing by $|F_n|$, taking the supremum over $x \in X$, and letting $n$ tend to infinity, we obtain the desired estimate.
\end{proof}

Now we show that if a factor map only changes a small percentage of symbols, then the topological conditional entropy is small, which implies by Theorem~\ref{bowenfiber} that the entropy drop over the factor is also small.
\begin{defn}
Let $\pi : X \to Y$ be a factor map between $G$-subshifts. For a finite set $E \subset G$, $y \in Y$, and $x \in \pi^{-1}(y)$, we define
\begin{equation*}
N_E(x,y) = |\{ g \in E : x_g \neq y_g \}|.
\end{equation*}
Then define
\begin{equation*}
D_n(y) = \sup_{x \in \pi^{-1}(y)} \frac{N_{F_n}(x,y)}{|F_n|},
\end{equation*}
and $\overline{D}(\pi) = \limsup_n \sup_{y \in Y} D_n(y)$. 
\end{defn}

\begin{lemma} \label{Lemma:SmallGap}
Let $\pi : X \to Y$ be a factor map between $G$-subshifts $(X,T)$ and $(Y,S)$ on alphabet $\mathcal{A}$. Suppose that $ \overline{D}(\pi) < 1/2$. Then $h(T \mid S) \leq H(\overline{D}(\pi)) + \overline{D}(\pi) \log|\mathcal{A}|$.
\end{lemma}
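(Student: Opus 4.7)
First I would exploit that, since $X$ is a subshift, the clopen partition $\mathcal{U} = \{[a] : a \in \mathcal{A}\}$ by the symbol at the identity is generating, so by the remark following the definition of conditional entropy, $h(T \mid S) = h(T, \mathcal{U} \mid S)$. It therefore suffices to bound
\begin{equation*}
\frac{1}{|F_n|}\log \sup_{y \in Y} N(\mathcal{U}_{F_n} \mid \pi^{-1}(y)).
\end{equation*}
Two points of $X$ lie in the same atom of $\mathcal{U}_{F_n}$ exactly when they agree on a specific finite index set $E_n \subset G$ of size $|F_n|$ (namely $F_n^{-1}$, under the convention in the definition), so $N(\mathcal{U}_{F_n} \mid \pi^{-1}(y))$ counts the distinct restrictions $x|_{E_n}$ as $x$ ranges over $\pi^{-1}(y)$. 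Since the entropy limit is independent of the F\o lner sequence and $\{F_n^{-1}\}$ is itself F\o lner, I may work with the index set $F_n$ throughout the counting.

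Next I would fix $\epsilon > 0$ with $\alpha := \overline{D}(\pi)+\epsilon < 1/2$. By definition of $\overline{D}(\pi)$, for all sufficiently large $n$ we have $\sup_{y \in Y} D_n(y) \leq \alpha$, so every $x \in \pi^{-1}(y)$ satisfies $N_{F_n}(x,y) \leq \alpha|F_n|$; that is, $x|_{F_n}$ differs from $y|_{F_n}$ in at most $\lfloor \alpha |F_n|\rfloor$ positions. Counting patterns in $\mathcal{A}^{F_n}$ obtained from $y|_{F_n}$ by at most that many substitutions and invoking Lemma~\ref{Lemma:Grinch} to control the binomial sum yields
\begin{equation*}
N(\mathcal{U}_{F_n} \mid \pi^{-1}(y)) \;\leq\; \sum_{k=0}^{\lfloor \alpha|F_n|\rfloor}\binom{|F_n|}{k}|\mathcal{A}|^k \;\leq\; 2^{H(\alpha)|F_n|}\cdot |\mathcal{A}|^{\alpha|F_n|}.
\end{equation*}
Taking logarithms, dividing by $|F_n|$, taking the supremum over $y$, and letting $n \to \infty$ gives $h(T, \mathcal{U} \mid S) \leq H(\alpha) + \alpha \log |\mathcal{A}|$. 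Finally, using continuity of $H$ at $\overline{D}(\pi)$, I would let $\epsilon \to 0$ to recover the stated bound.

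The only real subtlety I foresee is the bookkeeping between the index set on which $\mathcal{U}_{F_n}$ records information (arising as $F_n^{-1}$ from the action-convention in the definition of $h(T,\mathcal{U}\mid S)$) and the set $F_n$ on which $D_n(y)$ measures disagreement. This is handled by the F\o lner-independence of the entropy limit, or equivalently by noting that replacing $\{F_n\}$ with $\{F_n^{-1}\}$ in the definition of $\overline{D}(\pi)$ does not change the hypothesis $\overline{D}(\pi) < 1/2$ we need. Once this alignment is settled, the argument reduces to the binomial count above and a one-line application of Lemma~\ref{Lemma:Grinch}.
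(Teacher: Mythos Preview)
Your proof is correct and follows essentially the same approach as the paper's: fix $\epsilon>0$, use the disagreement bound $N_{F_n}(x,y)\le(\overline{D}(\pi)+\epsilon)|F_n|$ for large $n$, count the possible patterns $x(F_n)$ over a fiber via a choice of the disagreement set and the symbols there, apply Lemma~\ref{Lemma:Grinch}, and let $\epsilon\to 0$. If anything, you are more careful than the paper about the $F_n$ versus $F_n^{-1}$ bookkeeping arising from the definition of $\mathcal{U}_{F_n}$; the paper simply passes silently to counting $\{x(F_n):x\in\pi^{-1}(y)\}$, relying (as you note) on F\o lner-independence of the limit.
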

\begin{proof}
Let $\gamma = \overline{D}(\pi)$, and let $\epsilon >0$ be such that $\gamma + \epsilon < 1/2$. 
Choose $n$ large enough so that 
for all $y \in Y$ and $x \in \pi^{-1}(y)$, we have
\begin{equation*}
 \frac{N_{F_n}(x,y)}{|F_n|} < \gamma + \epsilon.
\end{equation*}

Fix any $y \in Y$. For any $n$, define $\mathcal{L}_{F_n}(\pi^{-1}(y)) = \{x(F_n) \ : \ x \in \pi^{-1}(y)\}$. For any $x \in \pi^{-1}(y)$, define 
$K_{n,x} = \{ g \in F_n : x_g \neq y_g \}$, and note that
\begin{equation*}
|K_{n,x}| = N_{F_n}(x,y) \leq (\gamma+\epsilon) |F_n|.
\end{equation*}

Since $y$ is fixed, we see that $x(F_n)$ may be determined by $K_{n,x}$ (off of which $x$ is equal to $y$) and $x(K_{n,x})$, and therefore
\[
|\mathcal{L}_{F_n}(\pi^{-1}(y))| \leq 2^{H(\gamma+\epsilon) |F_n|} \cdot |\mathcal{A}|^{(\gamma+\epsilon)|F_n|},
\]
where we have used Lemma \ref{Lemma:Grinch}.

Taking logarithms, dividing by $|F_n|$, and letting $n$ tend to infinity gives
\begin{equation*}
h( T \mid S) \leq H(\gamma+\epsilon) + (\gamma+\epsilon)\log|\mathcal{A}|.
\end{equation*}
Since $\epsilon$ may be taken arbitrarily small, we obtain the desired result.
\end{proof}

The following corollary is immediate via Theorem~\ref{bowenfiber}.

\begin{cor}\label{Cor:SmallGap}
Under the assumptions of Lemma~\ref{Lemma:SmallGap}, $h(T) - h(S) \leq H(\overline{D}(\pi)) + \overline{D}(\pi) \log|\mathcal{A}|$.
\end{cor}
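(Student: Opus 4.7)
The plan is to note that this corollary follows immediately from concatenating Lemma~\ref{Lemma:SmallGap} with Theorem~\ref{bowenfiber}, which has already been established. Under the hypothesis $\overline{D}(\pi)<1/2$, Lemma~\ref{Lemma:SmallGap} provides the bound
\begin{equation*}
h(T \mid S) \leq H(\overline{D}(\pi)) + \overline{D}(\pi) \log|\mathcal{A}|.
\end{equation*}
Meanwhile, Theorem~\ref{bowenfiber} (the result of Yan providing the general-amenable-group analogue of Bowen's and Ledrappier--Walters's inequality) asserts that for any factor map $\pi:X\to Y$ between $G$-systems, $h(T) \leq h(S) + h(T\mid S)$, i.e., $h(T)-h(S) \leq h(T\mid S)$. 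Chaining these two inequalities yields the conclusion.

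There is no real obstacle here, since both inputs are in hand: Lemma~\ref{Lemma:SmallGap} controls the conditional entropy $h(T\mid S)$ by a counting argument (bounding the number of preimage patterns via the binary entropy estimate from Lemma~\ref{Lemma:Grinch}), and Theorem~\ref{bowenfiber} provides the bridge from conditional entropy to the entropy drop. The proof is a single line: apply Theorem~\ref{bowenfiber} and then substitute the bound from Lemma~\ref{Lemma:SmallGap}.
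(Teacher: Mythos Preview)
Your proposal is correct and matches the paper's own argument exactly: the paper simply states that the corollary is immediate via Theorem~\ref{bowenfiber}, i.e., one chains $h(T)-h(S)\leq h(T\mid S)$ with the bound on $h(T\mid S)$ from Lemma~\ref{Lemma:SmallGap}.
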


\section{Proof of Theorem \ref{Thm:SubshiftEntropies}}\label{mainprf}

In this section we present the proof of Theorem \ref{Thm:SubshiftEntropies}. The main ingredient in the proof is Proposition \ref{Prop:Rainbow}. We present this proposition first and then apply it in the proof of the main theorem below.


\begin{prop} \label{Prop:Rainbow}
Let $(X,R)$ be a subshift. Then for any $\epsilon \in (0,1/2)$, there exists a finite sequence of subshifts $(X_0,R_0),\dots,(X_{M+1},R_{M+1})$ and factor maps $\varphi_{m+1} : X_m \to X_{m+1}$ for $0 \leq m \leq M$ such that $(X_0,R_0) = (X,R)$, $(X_{M+1},R_{M+1})$ is the trivial (one-point) system, and $h(R_m \mid R_{m+1}) < \epsilon$ for all $0 \leq m \leq M$.
\end{prop}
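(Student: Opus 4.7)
The plan is to build the chain iteratively, using the generalized Marker Lemma (Lemma~\ref{markerlemma}) together with the density estimates from Section~\ref{density}. Starting from $X_0 = X$, each factor map $\varphi_{m+1} \colon X_m \to X_{m+1}$ will modify only a small-density fraction of coordinates, so that by Corollary~\ref{Cor:SmallGap} the conditional entropy $h(R_m \mid R_{m+1})$ is bounded by $H(\delta) + \delta \log |\mathcal{B}|$, which can be made less than $\epsilon$ by choosing $\delta > 0$ small. Here $\mathcal{B} = \mathcal{A} \cup \{\ast\}$ is the alphabet of $X$ augmented by a single new ``erasure'' symbol $\ast$, which is reused across all stages so that the alphabet stays bounded throughout the iteration.

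In detail, first fix $\delta \in (0, 1/2)$ with $H(\delta) + \delta \log|\mathcal{B}| < \epsilon$. Using the Ornstein-Weiss theorem, choose a collection $\{S_1, \dots, S_N\}$ of $\delta$-quasitiles for $G$ with minimum tile size $m$ large enough that Lemma~\ref{Lemma:Density} forces $\overline{D}(F) \leq \delta$ for any $(S^{-1}, k)$-disjoint set $F$, with $S = \bigcup_i S_i$ and $k \leq |S|$ chosen small relative to $m$. Pick a finite set $T \subset G$ large enough (e.g.\ $(K, \delta')$-invariant for suitable $K$ and $\delta'$) to satisfy the hypotheses of Lemma~\ref{Lemma:PeriodicPatterns}. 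Then apply Lemma~\ref{markerlemma} to $X_m$ with parameters $S$, $T$, and $k$ to obtain a clopen marker set $F_m \subset X_m$. Define the sliding block code $\varphi_{m+1} \colon X_m \to \mathcal{B}^G$ by $\varphi_{m+1}(x)(g) = \ast$ if $\sigma^g(x) \in F_m$ and $\varphi_{m+1}(x)(g) = x(g)$ otherwise. Since $F_m$ is determined by finitely many coordinates, this is a valid sliding block code and $X_{m+1} := \varphi_{m+1}(X_m)$ is a subshift. The density of positions where $\varphi_{m+1}$ changes the symbol is at most $\overline{D}(F_m) \leq \delta$, so Corollary~\ref{Cor:SmallGap} gives $h(R_m \mid R_{m+1}) \leq H(\delta) + \delta \log|\mathcal{B}| < \epsilon$.

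The main obstacle is ensuring that this iteration terminates at the trivial system after finitely many steps: the factor map $\varphi_{m+1}$ never increases entropy, but in principle it need not strictly decrease it. To force termination, we must arrange that each stage produces a strict entropy drop of at least some positive $c > 0$ whenever $h(R_m) > 0$. The key tool here is the strong periodicity conclusion of Lemma~\ref{markerlemma}: outside $\bigcup_{s \in S^{-1}S} \sigma^s(F_m)$, points of $X_m$ exhibit $k$ periods from $S^{-1}S$, and hence by Lemma~\ref{Lemma:PeriodicPatterns} their patterns on large shapes $T$ live in a set of sub-exponentially many configurations. This restricts the information available away from the markers and forces the erasure at markers to decrease entropy by a quantifiable amount, which can be tracked by a pigeonhole comparison of $|\mathcal{L}_{F_n}(X_m)|$ and $|\mathcal{L}_{F_n}(X_{m+1})|$. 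Once $h(R_m) < \epsilon$, we set $(X_{M+1}, R_{M+1})$ to be the trivial one-point system; the conditional entropy of this final factor is at most $h(R_M) < \epsilon$ by the general bound $h(T \mid S) \leq h(T)$, completing the chain. The crux of the argument is this quantitative balance between an upper bound on $h(R_m \mid R_{m+1})$ (for the $\epsilon$ constraint) and a lower bound on $h(R_m) - h(R_{m+1})$ (for termination).
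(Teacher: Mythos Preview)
Your bound on each $h(R_m \mid R_{m+1})$ is essentially the paper's argument (you should cite Lemma~\ref{Lemma:SmallGap} rather than its corollary, since you want the conditional entropy and not merely the entropy drop, but this is cosmetic). The real problem is exactly where you flag it: termination.

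You propose to re-apply the Marker Lemma to each successive $X_m$ and erase at the new marker set $F_m$. But the periodicity conclusion of Lemma~\ref{markerlemma} constrains patterns only \emph{outside} the $S^{-1}S$-neighborhood of $F_m$; it says nothing about the neighborhood itself except at the marker positions. Since you erase only at $F_m$ and not at its $S^{-1}S$-thickening, all of the non-periodic information living in that neighborhood (whose density is of order $|S^{-1}S|\cdot\overline D(F_m)$, not $\overline D(F_m)$) survives into $X_{m+1}$. There is therefore no uniform lower bound $c>0$ on the entropy drop. Worse, nothing prevents the new marker set $F_{m+1}$ for $X_{m+1}$ from being essentially the $\ast$-locus you just created (those positions still sit inside patterns without $k$ periods, now over the alphabet $\mathcal B$), in which case $\varphi_{m+2}$ changes nothing and the iteration stalls. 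Your suggested ``pigeonhole comparison of $|\mathcal{L}_{F_n}(X_m)|$ and $|\mathcal{L}_{F_n}(X_{m+1})|$'' does not address this, because Lemma~\ref{Lemma:PeriodicPatterns} only controls the factor coming from positions far from markers, and that factor is the same before and after your erasure.

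The paper sidesteps this entirely by applying the Marker Lemma \emph{once}, to $X$ itself, obtaining a single clopen $F$. The first map $\varphi_1$ writes a new symbol $a$ at visits to $F$; then, fixing an enumeration $G=\{g_1,g_2,\dots\}$, each subsequent $\varphi_{m+1}$ writes a second new symbol $b$ at positions $g$ with $x_{g_{m+1}^{-1}g}=a$. Thus after $m$ steps every position whose $G_m$-translate hits the original $F$-locus has been overwritten. Once $m$ is large enough that $TSS^{-1}\subset G_m$, every position retaining its original symbol lies outside the full $SS^{-1}$-neighborhood of $F$ and therefore carries a $k$-periodic $T$-pattern; Lemma~\ref{Lemma:PeriodicPatterns} plus a quasitiling count then gives $h(R_m)<\epsilon$ directly. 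The termination is thus not an entropy-decrement argument at all but a finite combinatorial exhaustion of a fixed neighborhood, and the use of two distinct new symbols $a,b$ (rather than a single $\ast$) is what allows later maps to locate the original $F$-visits and spread from them.
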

\begin{proof}
Let $(X,R)$ be a $G$-subshift with alphabet $\mathcal{A}$. We assume that $|\mathcal{A}| \geq 2$, since when $|\mathcal{A}| = 1$, we have $h(R) = 0$ and the proposition holds trivially.



Let $\epsilon \in (0,1/2)$. 
Choose $k \geq 1$ such that $4 \log( |\mathcal{A}| ) / k < \epsilon$. 
Choose $\delta \in (0,1)$ and $m_0 \geq 1$ such that
\begin{equation} \label{Eqn:Kacey}
H \left( \frac{(k+1)(1+\delta)}{(1-\delta) m_0} + \delta \right) + \left( \frac{(k+1)(1+\delta)}{(1-\delta) m_0} + \delta \right) \log |\mathcal{A}| < \epsilon/2.
\end{equation}
Choose a collection $\{S_1,\dots,S_M\}$ that $\delta$-quasitiles $G$ such that $\min_i |S_i| \geq m_0$ and $S = \bigcup_iS_i$ has cardinality at least $k$.
Choose $\eta \in (0,1)$ such that 
\begin{equation}\label{Eqn:Humbug}
\log(|\mathcal{A}|) (2k^{-1} (1- \eta)^{-1}  + \eta) < \epsilon/2.
\end{equation}
Choose a collection $\{T_1,\dots,T_N\}$ that $\eta$-quasitiles $G$ such that \newline $2k^2 \log_{|\mathcal{A}|} |SS^{-1}| \leq |T_1| = \min_i |T_i|$ and for all $s \in SS^{-1}$ and $i = 1,\dots,N$, we have
\begin{equation*}
|T_i \, \triangle \, T_i s| < \frac{|T_i|}{2 k^2}.
\end{equation*}
Let $T = \bigcup_i T_i$.

Now apply the Marker Lemma (Lemma \ref{markerlemma}) with parameters $k,S^{-1}$, and $T$. We get a clopen set $F \subset X$ such that $F$ is $(S^{-1},k+1)$-disjoint and if $x \in X$ satisfies
\begin{equation*}
x \notin \bigcup_{s \in S S^{-1}} \sigma^s(F),
\end{equation*}
then $x(T)$ has $k$ periods in $S S^{-1}$. 
By Lemma \ref{Lemma:Density}, we obtain that
\begin{equation} \label{Eqn:Brandi}
\overline{D}(F) \leq \frac{(k+1)(1+\delta)}{(1-\delta) m_0} + \delta.
\end{equation}

Before defining our factor maps, we require a few more definitions. Let $G = \{g_k\}_{k=1}^{\infty}$ be an enumeration of $G$, with the convention that $g_1 =e$. Let $G_m = \{ g_1,\dots,g_m\}$. We suppose that $a$ and $b$ are symbols that are \textit{not} contained in $\mathcal{A}$, and we let $\mathcal{B} = \mathcal{A} \cup \{a,b\}$.

Now we define our factor maps. First, let $\varphi_1 : X \to \mathcal{B}^G$ be defined by the rule
\begin{equation*}
\varphi_1(x)_g = \left\{ \begin{array}{ll}
                                          a, & \sigma^{g}(x) \in F \\
                                          x_g, & \text{otherwise}.
                                    \end{array}
                             \right.
\end{equation*}
Since $F$ is clopen, $\varphi_0$ is a sliding block code. 
Let $X_1 = \varphi_1(X)$.
Inductively, suppose that $\varphi_1,\dots,\varphi_m$ and $X_1,\dots,X_m$ have been defined. Define $\varphi_{m+1} : X_m \to \mathcal{B}^G$ by the rule
\begin{equation*}
\varphi_{m+1}(x)_g = \left\{ \begin{array}{ll}
                                          b, & \text{ if } x_g \neq a \text{ and } x_{g_{m+1}^{-1} g} = a \\
                                          x_g, & \text{otherwise}.
                                    \end{array}
                             \right.
\end{equation*}
It is clear that $\varphi_{m+1}$ is a sliding block code.
Let $X_{m+1} = \varphi_{m+1}(X_m)$. 
This concludes our definition of the factor maps $\{\varphi_m\}_{m=0}^{\infty}$ and the subshifts $\{(X_m,R_m)\}_{m=0}^{\infty}$.

The remainder of the proof will be devoted to showing that the set of entropies of the subshifts $(X_m,R_m)$ is $\epsilon$-dense in the 
interval $[0, h(R)]$ by verifying that the topological conditional entropies $h(R_m \mid R_{m+1})$ are smaller than $\epsilon$
and that the entropy $h(R_m)$ is `eventually small,' i.e. $h(R_m) < \epsilon$ for sufficiently large $m$.

Both claims will be proved by appealing to properties of $F$ guaranteed by the Marker Lemma. The former will follow from the fact that visits to $F$ have small density, meaning that the changes
made via each $\varphi_m$ have small density. The latter will follow from the fact that portions of points of $X$ which are not near visits to $F$ are highly periodic, and since letters at locations near visits to $F$ are changed to $a$ for large $m$, such $(X_m,R_m)$ will have small entropy by Lemma~\ref{Lemma:PeriodicPatterns}.

Now we will establish that each topological conditional entropy satisfies $h(R_m \mid R_{m+1}) < \epsilon$.


\begin{claim}\label{easyclaim0}
 $\overline{D}(\varphi_1) \leq \overline{D}(F)$. 
\end{claim}
(In fact, this inequality is an equality, but we will not need that fact.)
\begin{proof}
Let $\epsilon_1 >0$. Choose $n$ large enough so that $D_n(F) \leq \overline{D}(F)+\epsilon_1$.
Let $y \in X_1$ and $x \in \varphi_1^{-1}(y)$. Then 
\begin{align*}
N_{F_n}(x,y) & = | \{ g \in F_n : x_g \neq y_g \} | \\
& = | \{ g \in F_n : \sigma^g(x) \in F \} | \\
& = N_{F_n}(x,F) \\
& \leq (\overline{D}(F)+ \epsilon_1)|F_n|
\end{align*}
Dividing by $|F_n|$, taking supremum over $y \in Y$ and $x \in \varphi_1^{-1}(y)$, and letting $n$ tend to infinity yields
\begin{equation*}
\overline{D}(\varphi_1) \leq \overline{D}(F)+\epsilon_1.
\end{equation*}
Since $\epsilon_1$ may be taken arbitrarily small, we obtain that $\overline{D}(\varphi_1) \leq \overline{D}(F)$. 
\end{proof}
\noindent

%
Note that $\overline{D}(F) < 1/2$ (by (\ref{Eqn:Kacey}) and (\ref{Eqn:Brandi})). 
Then by Corollary \ref{Cor:SmallGap} and Claim~\ref{easyclaim0}, we see that 
\begin{align*}
h(R \mid R_1) &  \leq  H(\overline{D}(\varphi_1)) + \overline{D}(\varphi_1) \log |\mathcal{A}| \\ 
& \leq H(\overline{D}(F)) + \overline{D}(F) \log |\mathcal{A}|. 
\end{align*}
Then by (\ref{Eqn:Kacey}) and (\ref{Eqn:Brandi}), we conclude that $h(R \mid R_1) < \epsilon$.

\begin{claim}
For all $m \geq 1$, we have $\overline{D}(\varphi_{m+1}) \leq \overline{D}(F)$.
\end{claim}
\begin{proof}
 Let $\epsilon_1 >0$. Choose $n$ large enough so that $D_n(F) \leq \overline{D}(F) +\epsilon_1$ and $|(g_{m+1}^{-1} F_n) \setminus F_n| \leq \epsilon_1 |F_n|$. Let $y \in X_{m+1}$ and $x \in \varphi_{m+1}^{-1}(y)$. Let $z \in X$ be such that $x =  \varphi_m \circ \dots \varphi_1(z)$. Then
\begin{align*}
N_{F_n}(x,y)  &= | \{ g \in F_n : x_g \neq y_g \}| \\
& \leq | \{ g \in F_n : x_g \neq a \text{ and } x_{g_{m+1}^{-1} g} = a| \\
& \leq | \{ g \in F_n : x_{g_{m+1}^{-1} g} = a \} | \\
& = N_{g_{m+1}^{-1} F_n}(z,F) \\
& \leq N_{F_n}(z,F) + N_{(g_{m+1}^{-1} F_n) \setminus F_n}(z,F) \\
& \leq (\overline{D}(F)+\epsilon_1)|F_n| + |(g_{m+1}^{-1} F_n) \setminus F_n| \\
& \leq (\overline{D}(F)+\epsilon_1)|F_n| + \epsilon_1 |F_n|.
\end{align*}
Dividing by $|F_n|$, taking the supremum over $y \in X_{m+1}$ and $x \in \varphi_{m+1}^{-1}(y)$, and letting $n$ tend to infinity yields
\begin{equation*}
\overline{D}(\varphi_{m+1}) \leq \overline{D}(F)+ 2\epsilon_1.
\end{equation*}
Since $\epsilon_1$ may be taken arbitrarily small, we obtain that $\overline{D}(\varphi_{m+1}) \leq \overline{D}(F)$. 
\end{proof}
%
By Corollary \ref{Cor:SmallGap} and the previous claim, we see that 
\begin{align*}
h(R_m \mid R_{m+1}) & \leq H(\overline{D}(\varphi_{m+1})) + \overline{D}(\varphi_{m+1}) \log |\mathcal{A}| \\
& \leq H(\overline{D}(F)) + \overline{D}(F) \log |\mathcal{A}| .
\end{align*}
Then by (\ref{Eqn:Kacey}) and (\ref{Eqn:Brandi}), we conclude that $h(R_m \mid R_{m+1}) < \epsilon$.

Now we proceed to show that $h(R_m) < \epsilon$ for sufficiently large $m$.
Note that by our choice of the quasitiles $T_1,\dots,T_N$, we may apply Lemma \ref{Lemma:PeriodicPatterns} with parameters $k$, $SS^{-1}$, and $T_i$, obtaining that for each $i$, we have
\begin{equation} \label{Eqn:Loretta}
| \{ v \in \mathcal{A}^{T_i} : v \text{ has $k$ periods from $SS^{-1}$} \} | \leq |\mathcal{A}|^{2|T_i|/k}.
\end{equation}
\begin{lemma} \label{Lemma:SmallEntropy}
For large enough $m$, we have $h(R_m) < \epsilon$.
\end{lemma}
\begin{proof}
Choose $m$ large enough that $T S S^{-1} \subset G_m$. Let $\delta_1 \in ( \overline{D}(F), 1/2)$ and $\delta_2>0$ be arbitrary. Choose $n$ large enough that $D_n(F) \leq \delta_1$ and 
\begin{equation*}
\max \left( \frac{ |F_n \, \triangle \, G_m^{-1} F_n|}{|F_n|}, \frac{|F_n \, \triangle TT^{-1} F_n|}{|F_n|} \right) < \delta_2.
\end{equation*}
Since $\{T_1,\dots,T_N\}$ is a set of $\eta$-quasitiles, there exists a collection $\{C_1,\dots,C_N\}$ of center sets corresponding to $F_n$ with the additional property that if $c \in C_i$, then $T_i c \cap F_n \neq \varnothing$.

Let $w \in \mathcal{L}_{F_n}(X_m)$. Choose $y \in X_m$ such that $y(F_n) = w$, and choose $x \in X$ such that $y = \varphi_{m} \circ \dots \varphi_1(x)$. Let $J_w = \{ g \in G_m^{-1} F_n : y_g = a\}$. Note that for $g \in F_n$, we have that $w_g = a$ if and only if $g \in J_w$, and $w_g = b$ if and only if $g \in (G_m J_w) \setminus J_w$.  Furthermore,
\begin{align} \label{Eqn:Armando}  \begin{split}
|J_w| &= |J_w \cap F_n| + |J_w \setminus F_n| \\
& \leq N_{F_n}(x,F) + | (G^{-1}_m F_n) \setminus F_n| \\
& \leq \delta_1 |F_n| + \delta_2 |F_n|,
\end{split}
\end{align}
where we have used our choice of $n$ in the last estimate. 

Now for each $i$, let $C_i^w$ be the set of $c \in C_i$ such that $(F_n \cap T_i c) \setminus (G_m J_w) \neq \varnothing$. Note that $C_i^w$ is completely determined by $J_w$ (along with the already chosen $F_n$, $T_i$, and $G_m$).  

\begin{claim} \label{Claim:Handel}
If $c \in C_i^w$, then $x(T_i c)$ has $k$ periods from $SS^{-1}$. 
\end{claim}
\begin{proof}
To begin, suppose that $c \in C_i$, $g \in F_n \cap T_ic$, and
\begin{equation*}
\sigma^c(x) \in \bigcup_{s \in SS^{-1}} \sigma^s (F).
\end{equation*}
Then there exists $s \in SS^{-1}$ such that $\sigma^{sc}(x) \in F$. Hence $y_{sc}=a$. Let $g' = sc$. Then $c = s^{-1} g'$. Now let $t \in T_i$ be such that $tc =g$. Then $g' = s t^{-1} g \in SS^{-1} T^{-1} F_n \subset G_m^{-1} F_n$, by our choice of $m$. Therefore $g' \in J_w$, and then $g = t s^{-1} g' \in T SS^{-1} J_w \subset G_m J_w$, again using our choice of $m$.
We conclude that if $c \in C_i$ and
\begin{equation*}
\sigma^c(x) \in \bigcup_{s \in SS^{-1}} \sigma^s (F),
\end{equation*}
then $F_n \cap T_i c \subset G_m J_w$. By the contrapositive, if $c \in C_i$ and $(F_n \cap T_i c) \setminus (G_m J_w) \neq \varnothing$, then 
\begin{equation*}
\sigma^c(x) \notin \bigcup_{s \in SS^{-1}} \sigma^s (F),
\end{equation*}
which gives that $x(Tc)$ has $k$ periods from $SS^{-1}$ by our choice of $F$.
Thus, we have shown that if $c \in C_i^w$, then $x(Tc)$ has $k$ periods from $SS^{-1}$, and therefore so does $x(T_ic)$ (since $T_i \subset T$).
\end{proof}


For a finite set $E$, let $\mathcal{P}(E)$ denote the power set of $E$.
Now consider the map $\phi : \mathcal{L}_{F_n}(X_m) \to \mathcal{P}(G_m^{-1} F_n)$  defined by $w \mapsto J_w$. 
\begin{claim} \label{Claim:Hearts}
For each $J \subset \mathcal{P}(G_m^{-1} F_n)$, we have
\begin{equation*}
|\phi^{-1}(J)| \leq |\mathcal{A}|^{\eta |F_n|} \cdot |\mathcal{A}|^{(2/k) \sum_i |T_i| \cdot |C_i|}.
\end{equation*}
\end{claim}
\begin{proof}
Let $J \in \mathcal{P}(G_m^{-1} F_n)$. Define $C'_i = \{ c \in C_i : (F_n \cap T_i c) \setminus (G_m J) \neq \varnothing \}$. Now let $w \in \phi^{-1}(J)$, i.e., $J_w = J$, and let $x \in X$ be such that $x(F_n) = w$.
Note that since $J_w = J$, we also have $C_i^w = C'_i$ for each $i$. 

Let $g \in F_n$. For $g \in G_mJ$, we have that $w_g = a$ whenever $g \in J$ and $w_g = b$ whenever $g \notin J$. Now suppose $g \in T_i c \setminus (G_mJ)$ for some $c \in C_i$. Then $w_g = x_g$. Also, we have $c \in C'_i$, and by Claim \ref{Claim:Handel}, $x(T_ic)$ has $k$ periods from $SS^{-1}$. Hence $w \in \phi^{-1}(J)$ is uniquely determined by a tuple of the form
\begin{equation*}
\left( \bigl(x(T_1c) \bigr)_{c \in C'_1}, \ \ldots, \ \bigl(x(T_Nc)\bigr)_{c \in C'_N}, \ w(F_n \setminus ( \bigcup_i T_i C_i)) \right)
\end{equation*}
where each $x(T_ic)$ has $k$ periods from $S S^{-1}$. 
Thus, we have
\begin{align*}
|\phi^{-1}(J)| & \leq  \prod_{i=1}^N | \{ v \in \mathcal{A}^{T_i} : v \text{ has $k$ periods from $SS^{-1}$} \}|^{|C'_i|}  \\
 & \quad  \quad \cdot |\mathcal{A}|^{|F_n \setminus (\bigcup_i T_i C_i)|} \\
 & \leq  \prod_{i=1}^N | \{ v \in \mathcal{A}^{T_i} : v \text{ has $k$ periods from $SS^{-1}$} \}|^{|C_i|} \\
 & \quad  \quad \cdot |\mathcal{A}|^{|F_n \setminus (\bigcup_i T_i C_i)|}.
\end{align*}
Then by Lemma \ref{Lemma:PeriodicPatterns} and the fact that $\{T_1 C_1, \dots, T_N C_N\}$ $(1-\eta)$-covers $|F_n|$, we obtain the desired inequality
\begin{equation*}
|\phi^{-1}(J)| \leq |\mathcal{A}|^{\eta |F_n |} \cdot |\mathcal{A}|^{(2/k) \sum_i |T_i| \cdot |C_i|}.
\end{equation*}
\end{proof}


Finally, using (\ref{Eqn:Armando}) in combination with Lemma \ref{Lemma:Grinch} and Claim \ref{Claim:Hearts} yields the following estimate on the cardinality of $|\mathcal{L}_{F_n}(X_m)|$:
\begin{align*}
|\mathcal{L}_{F_n}(X_m)| 
& \leq 2^{H(\delta_1+\delta_2) |F_n|}  |\mathcal{A}|^{\eta |F_n|} \cdot |\mathcal{A}|^{(2/k) \sum_i |T_i| \cdot |C_i|}.
\end{align*}

Using the $\eta$-disjointness of $T_1,\dots,T_N$, we see that for each $i$, we have
\begin{align*}
|T_i C_i| & = \left| \bigcup_{c \in C_i} T_i c \right| \\
& \geq \sum_{c \in C_i} (1-\eta) |T_i c | \\
& \geq (1-\eta) |T_i| \cdot |C_i|.
\end{align*}
Recall that by our choice of centers, if $c \in C_i$, then $T_i c \cap F_n \neq \varnothing$. Let $g \in T_i c \cap F_n$. Then $g = tc$ for some $t \in T_i$, and so $c = t^{-1} g \in T^{-1} F_n$. Hence $T_i C_i \subset TT^{-1} F_n$.
Combining the previous displayed formula with the quasi-invariance of $F_n$ with respect to $TT^{-1}$ and $\delta_2$ (by choice of $n$), we obtain
\begin{align*}
\sum_i |T_i| \cdot |C_i| & \leq \frac{1}{1-\eta} \sum_i |T_i C_i| \\
& \leq \frac{1}{1-\eta}  \left| \bigcup_i T_i C_i \right| \\
& \leq \frac{1}{1-\eta} | TT^{-1} F_n | \\
& \leq \frac{1+\delta_2}{1-\eta} |F_n|.
\end{align*}
Finally, putting together all of the above estimates, we get
\begin{align*}
|\mathcal{L}_{F_n}(X_m)| & \leq 2^{ H(\delta_1 +\delta_2)|F_n|} |\mathcal{A}|^{(2/k) \sum_i |T_i| \cdot |C_i| } |\mathcal{A}|^{\eta |F_n|} \\
& \leq 2^{ H(  \delta_1 + \delta_2)|F_n|} |\mathcal{A}|^{(2/k)  \frac{1+\delta_2}{1-\eta} |F_n|  } |\mathcal{A}|^{\eta |F_n|}.
\end{align*}
Taking logarithms, dividing by $|F_n|$, and letting $n$ tend to infinity, we get
\begin{equation*}
h(R_m) \leq H(\delta_1+\delta_2) + \left( \frac{2(1+\delta_2)}{k(1-\eta)} + \eta \right) \log |\mathcal{A}|.
\end{equation*}
Since $\delta_2>0$ was arbitrary, and since $\delta_1 \in (\overline{D}(F),1/2)$ was arbitrary, we see that
\begin{equation*}
h(R_m) \leq H(\overline{D}(F)) + \left( \frac{2}{k(1-\eta)} + \eta \right) \log |\mathcal{A}|.
\end{equation*}
By (\ref{Eqn:Kacey}), (\ref{Eqn:Humbug}), and (\ref{Eqn:Brandi}), we conclude that $h(R_m) < \epsilon$. This finishes the proof of Lemma \ref{Lemma:SmallEntropy}.
\end{proof}

Choosing $M$ sufficiently large, we have now established that $h(R_M) < \epsilon$. Now let $(X_{M+1},R_{M+1})$ be the one-point system, which is trivially a factor of $(X_M,R_{M})$ such that $h(R_M \mid R_{M+1}) = h(R_M) < \epsilon$.  As a result, we have finished the proof of the proposition.
\end{proof}

We are now prepared to prove Theorem \ref{Thm:SubshiftEntropies}.

\vspace{2mm}

\begin{PfofThm1}

Let $G$ and $(X,T)$ be as in the statement of the theorem. 
Let $\epsilon >0$.
By Proposition \ref{Prop:Rainbow}, there exist subshifts $(X_0,T_0),\dots,(X_{M+1},T_{M+1})$ and factor maps $\varphi_{m+1} : X_m \to X_{m+1}$ for $0 \leq m \leq M$ such that $(X_0,T_0) = (X,T)$, $(X_{M+1},T_{M+1})$ is the one-point system, and $h(T_m \mid T_{m+1}) < \epsilon$ for each $0 \leq m \leq M$. 
Note that each $(X_m,T_m)$ is a factor of $(X,T)$ (with factor map $\varphi_m \circ \dots \circ \varphi_1$).
Then by Theorem \ref{bowenfiber}, we have that $h(T_m) - h(T_{m+1}) \leq h(T_m \mid T_{m+1}) < \epsilon$ for each $m = 0,\dots, M$. Also, since $(X_{M+1},T_{M+1})$ is the one-point system, we see that $h(T_M) < \epsilon$. Hence we have that $\{h(T_m) : 0 \leq m \leq M \}$ is $\epsilon$-dense in $[0,h(T)]$.
Since $\epsilon >0$ was arbitrary, we conclude that  $\mathcal{H}_{sub}(X)$ is dense in $[0,h(T)]$, which finishes the proof of Theorem \ref{Thm:SubshiftEntropies}.
\end{PfofThm1}



\section{Proof of Theorem \ref{Thm:RelativeSubshiftEntropies}}\label{relprf}



Our first step is to prove an auxiliary result that will form the majority of the proofs of both Theorem \ref{Thm:RelativeSubshiftEntropies} and Theorem \ref{Thm:ZeroDimSystems}. Informally, this result shows that the subshift factors $(X_m,T_m)$ from Proposition~\ref{Prop:Rainbow} still have 
$\epsilon$-dense entropies even after taking a topological joining with another $G$-system. (Recall from Section \ref{Sect:TopJoinings} that $J(\pi,\varphi)$ is our notation for the topological joining of two factor maps $\pi$ and $\varphi$ with common domain.)


\begin{prop} \label{Prop:DoubleRainbow}
Suppose $(X,T)$ is a zero-dimensional $G$-system with finite entropy, $\varphi : X \to Z$ is a factor map onto a subshift $(Z,R)$, and $\pi : X \to Y$  is a factor map onto a $G$-system $(Y,S)$. Let $(W,Q) = J(\varphi,\pi)$. Then for any $\epsilon >0$, there exist subshifts $(Z_0,R_0),\dots,(Z_{M},R_{M})$ and factor maps $\psi_m : X \to Z_m$ for $0 \leq m \leq M+1$ such that if $(W_m,Q_m) = J(\psi_m,\pi)$, then $\{h(Q_m) : 0 \leq m \leq M\}$ is $\epsilon$-dense in $[h(S), h(Q)]$. 
\end{prop}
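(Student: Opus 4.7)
The plan is to apply Proposition~\ref{Prop:Rainbow} to the subshift factor $(Z,R)$ and transport the resulting chain of subshift factor maps through the topological joining with $(Y,S)$, using Lemma~\ref{Lemma:SpecialFlower} to control the conditional entropies after joining.

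First I would apply Proposition~\ref{Prop:Rainbow} to $(Z,R)$ with parameter $\epsilon$ to obtain a chain of subshifts $(Z_0,R_0) = (Z,R), (Z_1,R_1), \dots, (Z_{M+1},R_{M+1})$, where $(Z_{M+1},R_{M+1})$ is the trivial one-point system, together with factor maps $\tau_{m+1}: Z_m \to Z_{m+1}$ satisfying $h(R_m \mid R_{m+1}) < \epsilon$ for every $m$. Then I would set $\psi_0 := \varphi$ and $\psi_m := \tau_m \circ \psi_{m-1}$ for $m \geq 1$, so each $\psi_m: X \to Z_m$ is a factor map. Forming $(W_m,Q_m) := J(\psi_m,\pi)$, one has $(W_0,Q_0) = (W,Q)$ by definition, and since $Z_{M+1}$ is trivial the system $(W_{M+1},Q_{M+1})$ is canonically conjugate to $(Y,S)$, so in particular $h(Q_{M+1}) = h(S)$.

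Next, each $\tau_{m+1}$ induces a natural factor map $\widetilde{\tau}_{m+1}: W_m \to W_{m+1}$ that acts on the $Z$-coordinate by $\tau_{m+1}$ and fixes the $Y$-coordinate. Applying Lemma~\ref{Lemma:SpecialFlower} to $\tau_{m+1}: Z_m \to Z_{m+1}$ with auxiliary system $(Y,S)$ yields $h(Q_m \mid Q_{m+1}) \leq h(R_m \mid R_{m+1}) < \epsilon$. Combined with Theorem~\ref{bowenfiber} applied to $\widetilde{\tau}_{m+1}$, this gives $h(Q_m) < h(Q_{m+1}) + \epsilon$, while the fact that factor maps do not increase entropy forces $h(Q_m) \geq h(Q_{m+1})$. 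Hence the sequence $h(Q_0) \geq h(Q_1) \geq \dots \geq h(Q_{M+1})$ descends monotonically from $h(Q)$ to $h(S)$ with consecutive gaps strictly smaller than $\epsilon$, so $\{h(Q_m) : 0 \leq m \leq M\}$ is $\epsilon$-dense in $[h(S), h(Q)]$. The finite-entropy hypothesis on $(X,T)$ guarantees $h(Q) \leq h(T) < \infty$, so approximation by finitely many values is meaningful.

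The one substantive step is transferring the conditional-entropy estimate from $h(R_m \mid R_{m+1})$ to $h(Q_m \mid Q_{m+1})$, which is precisely what Lemma~\ref{Lemma:SpecialFlower} is designed to do; everything else is essentially bookkeeping. The zero-dimensionality of $X$ does not seem to enter this proof explicitly and is presumably needed only for downstream applications, most notably Theorem~\ref{Thm:ZeroDimSystems}.
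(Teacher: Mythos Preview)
Your proposal is correct and follows essentially the same approach as the paper's own proof: apply Proposition~\ref{Prop:Rainbow} to $(Z,R)$, push the resulting chain through the joining with $(Y,S)$, and control the entropy drops via Lemma~\ref{Lemma:SpecialFlower} together with Theorem~\ref{bowenfiber}. Your handling of the endpoint via $(W_{M+1},Q_{M+1}) \cong (Y,S)$ is slightly cleaner than the paper's direct bound $h(Q_M) < h(S) + \epsilon$, and your closing remark that zero-dimensionality of $X$ is not actually used in this proposition is accurate.
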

\begin{proof}
Assume the hypotheses of the proposition. Let $\epsilon >0$. By applying Proposition \ref{Prop:Rainbow} to the subshift $(Z,R)$, we obtain that there exist subshifts $(Z_0,R_0),\dots,(Z_{M+1},R_{M+1})$ and factor maps $\varphi_{m+1} : Z_m \to Z_{m+1}$ for $0 \leq m \leq M$ such that $(Z_0,R_0) = (Z,R)$, $(Z_{M+1},R_{M+1})$ is the one-point system, and $h(R_m \mid R_{m+1}) < \epsilon$ for $0 \leq m \leq M$.   Define the factor maps $\psi_m : X \to Z_m$ by setting $\psi_m = (\varphi_m \circ \dots \circ \varphi_1) \circ \varphi$ for each $1 \leq m \leq M$. Let $(W_m,Q_m)$ be the topological joining $J(\psi_m,\pi)$. It remains to show that $\{h(Q_m) : 0 \leq m \leq M\}$ is $\epsilon$-dense in $[h(S), h(Q)]$. 

First, note that $(W,Q) = (W_0,Q_0)$, and for $0 \leq m \leq M$, 
let $\varphi_{m+1} \otimes \text{id} : W_m \to W_{m+1}$ be the factor map given by $(\varphi_{m+1} \otimes \text{id})(z,y) = (\varphi_{m+1}(z),y)$. Then by Theorem \ref{bowenfiber} and Lemma \ref{Lemma:SpecialFlower}, we have that for each $0 \leq m \leq M$, we have $h(Q_m) - h(Q_{m+1}) \leq h(Q_m \mid Q_{m+1}) \leq h(R_m \mid R_{m+1}) < \epsilon$. 
Finally, since $(Z_{M+1},R_{M+1})$ is the one-point system and $h(R_M \mid R_{M+1}) < \epsilon$, we see that $h(R_M) = h(R_M \mid R_{M+1})$ and then $h(Q_m) \leq h(S) + h(R_M) < h(S) + \epsilon$. Thus, we have established that $\{h(Q_m) : 0 \leq m \leq M\}$ is $\epsilon$-dense in $[h(S), h(Q)]$.
\end{proof}

\begin{rmk}
We note for future reference that $(W_m, Q_m)$ are intermediate factors for $\pi$; indeed, they factor onto $Y$ via the projection $\pi_2$ to the second coordinate, and are factors of $(X,T)$, as they are topological joinings $J(\psi_m,\pi)$ of factors of $(X, T)$.
\end{rmk}


\vspace{2mm}

\begin{PfofThm2}
 
Suppose $(X,T)$ and $(Y,S)$ are subshifts and $\pi : X \to Y$ is a factor map. For any $\epsilon > 0$, if we define $(Z,R) = (X,T)$ and let $\varphi : Z \to X$  be the identity map, then Proposition \ref{Prop:DoubleRainbow} yields intermediate factors $(W_m, Q_m)$ of $\pi$ for $0 \leq m \leq M$ whose entropies are $\epsilon$-dense in $[h(S), h(T)]$. Since each $(W_m, Q_m)$ is a subshift (as it is a topological joining of factors between subshifts) and $\epsilon>0$ was arbitrary, the proof is complete.

\end{PfofThm2}
\section{Proof of Theorem \ref{Thm:ZeroDimSystems}}\label{zeroprf}


Our proof proceeds by first using Proposition~\ref{Prop:DoubleRainbow} to show that the entropies of zero-dimensional intermediate factors are dense in $[h(S), h(T)]$, and then using inverse limits to show that $\mathcal{H}_0^{\pi}(X,Y) = [h(S), h(T)]$.

\vspace{2mm}

\begin{PfofThm3}

Suppose that $G$, $(X,T)$, $(Y,S)$, and $\pi$ are as in the theorem. We begin by showing that the entropies of zero-dimensional intermediate factors are dense in $[h(S), h(T)]$. 

Let $\epsilon >0$. Since $X$ is zero-dimensional, there exists a finite clopen partition $\mathcal{P}$ of $X$ such that $h(T_{\mathcal{P}}) > h(T) - \epsilon/2$. By Proposition \ref{Prop:DoubleRainbow} (taking $(Z,R)$ to be $(X_{\mathcal{P}}, T_{\mathcal{P}})$, $\varphi = \pi_{\mathcal{P}}$, and parameter $\epsilon/2$), there exist intermediate factors $(W_m, Q_m)$ of $\pi$ for $0 \leq m \leq M$ whose entropies are $\epsilon/2$-dense in 
$[h(S), h(T_{\mathcal{P}})]$, and therefore $\epsilon$-dense in $[h(S), h(T)]$. Since each $(W_m, Q_m)$ is zero-dimensional and $\epsilon>0$ was arbitrary, we conclude that $\mathcal{H}_0^{\pi}(X,Y)$ is dense in $[h(S),h(T)]$.


To complete the proof, we will construct zero-dimensional intermediate factors with arbitrary entropy $r \in [h(S), h(T)]$ as inverse limits of 
intermediate zero-dimensional factors using Lemma~\ref{Lemma:InvLimitFactor}. 

The case $r = h(S)$ is trivial, and so we let $r \in (h(S),h(T)]$. Let $(Z_0, R_0) = (Y, S)$,  $\varphi_0 = \pi$, and $\psi_0$ be the identity map on $Y$. 
By the density of $\mathcal{H}_0^{\pi}(X,Y)$ in $[h(S),h(T)]$, there exists a zero-dimensional system $(Z_1, R_1)$ and 
factor maps $\varphi_1 : X \to Z_1$ and $\pi_1 : Z_1 \to Z_0$ such that $\pi = \pi_1 \circ \psi_1$ and $h( R_1) \in (r-1,r)$. Now suppose we have defined $(Z_n, R_n)$, $\varphi_n : X \to Z_n$, and $\pi_n : Z_n \to Z_{n-1}$. By the density of $\mathcal{H}_0^{\varphi_n}(X,Z_n)$ in $[h(R_n),h(T)]$, there exists a zero-dimensional system $(Z_{n+1}, R_{n+1})$ and factor maps $\varphi_{n+1} :  X \to Z_{n+1}$ and $\pi_{n+1} : Z_{n+1} \to 
Z_n$ such that $\pi_n = \pi_{n+1} \circ \varphi_{n+1}$ and $h(R_{n+1}) \in (r - \frac{1}{n+1},r)$. Let $(Z, R) = \varprojlim (Z_n, R_n)$, and let $\varphi : X \to Z$ be the natural factor map (as in Lemma \ref{Lemma:InvLimitFactor}). Then $Z$ is an intermediate factor between $(X,T)$ and $(Y,S)$, and  by Lemma \ref{Lemma:InvLimitEntropy} and our choice of $h(R_n)$ for each $n$, we have that $h(R) = \lim_n h(R_n) = r$, which concludes the proof of the theorem.
\end{PfofThm3}

\bibliographystyle{abbrv}
\bibliography{FactorEntropiesRefs}

\end{document}